\DeclareSymbolFont{cyrillic}{T2A}{cmr}{m}{n}
\DeclareMathSymbol{\Sha}{\mathalpha}{cyrillic}{216}
\renewcommand\thefootnote{\textcolor{red}{\arabic{footnote}}} 
\numberwithin{equation}{section}
\newtheorem{dummy}{dummy}[section]
\newtheorem{theorem}[dummy]{Theorem}
\newtheorem{corollary}[dummy]{Corollary}
\newtheorem{lemma}[dummy]{Lemma}
\theoremstyle{definition}
\newtheorem{definition}[dummy]{Definition}
\newtheorem{remark}[dummy]{Remark}
\newtheorem{question}[dummy]{Question}
\theoremstyle{plain}
\newtheorem{conjecture}[dummy]{Conjecture}
\newcommand{\A}{\ensuremath{\mathbb{A}}}
\newcommand{\Z}{\ensuremath{\mathbb{Z}}}
\newcommand{\Q}{\ensuremath{\mathbb{Q}}}
\newcommand{\CC}{\ensuremath{\mathbb{C}}}
\newcommand{\LL}{\ensuremath{\mathbb{L}}}
\newcommand{\klat}{\ensuremath{\Lambda_{\operatorname{K3}}}}
\newcommand{\extklat}{\ensuremath{\tilde{\Lambda}_{\operatorname{K3}}}}
\newcommand{\mmlatx}{\ensuremath{\tilde{\Lambda}_{X}}}
\newcommand{\crs}{\ensuremath{\operatorname{crs}}}
\def\Db{\calD^{b}}
\newcommand{\gravk}{\ensuremath{K_0(\operatorname{Var}_k)}}
\newcommand{\gravc}{\ensuremath{K_0(\operatorname{Var}_\CC)}}
\newcommand{\gghs}{\ensuremath{K_0^{\oplus}(\operatorname{HS}_{\Z,2})}}
\newcommand{\gghsf}{\ensuremath{K_0^{\oplus}(\operatorname{HS}_{\Z})}}
\def\disc{\operatorname{disc}}
\def\Kn{{\ensuremath{\operatorname{K3}^{[n]}}}}
\def\N{\operatorname{N}}
\def\dv{\operatorname{div}}
\DeclareMathOperator{\rk}{rk}
\DeclareMathOperator{\Hom}{Hom}
\DeclareMathOperator{\olPic}{\overline{Pic}}
\DeclareMathOperator{\Br}{Br}
\DeclareMathOperator{\SHom}{\mathcal{H\kern -1pt}\textit{om}} 
\DeclareMathOperator{\SEnd}{\mathcal{E\kern -1pt}\textit{nd}} 
\DeclareMathOperator{\SExt}{\mathcal{E\kern -1pt}\textit{xt}} 
\DeclareMathOperator{\NS}{NS}
\def\dar[#1]{\ar@<2pt>[#1]\ar@<-2pt>[#1]}
\newcommand\calD{\mathcal{D}}
\newcommand{\functionstar}[4]{
\begin{array}{rcl} #1 &\longrightarrow &#2 \\ #3&\longmapsto &#4 \end{array}
}
\title{Counterexamples to the Kuznetsov--Shinder L-equivalence conjecture}
\author[R.~Meinsma]{Reinder Meinsma}
\address{
Fakultät für Mathematik und Informatik,
Universität des Saarlandes,
Campus E2.4, 66123 Saarbrücken, Germany
}
\email{meinsma@math.uni-sb.de}
\begin{document}
\begin{abstract}
     We disprove a conjecture of Kuznetsov--Shinder, which posits that $D$-equivalent simply connected varieties are $L$-equivalent, by constructing a counterexample using moduli spaces of sheaves on K3 surfaces. 
\end{abstract}
\maketitle
\tableofcontents

\let\thefootnote\relax\footnotetext{The research in this work has been supported by the MIS (MIS/BEJ - F.4545.21) Grant from the FNRS, a mobility grant by the FNRS and an ACR Grant from the Universit\'e Libre de Bruxelles.}

\section{Introduction}
The Grothendieck ring $\gravk$ of varieties over a field $k$ is the free abelian group generated by isomorphism classes of varieties over $k$ modulo the scissor relations:
\[
[X] = [X\setminus Z] - [Z] \qquad \text{for every closed } Z\subset X.
\]
The product in $\gravk$ is given by the fibre product: $$[X]\cdot [Y] \coloneqq [X\times_kY].$$ 
The class of the affine line, also called the Lefschetz motive, is denoted by $$\LL\coloneqq [\A^1_k]\in \gravk.$$ 

The class of a variety $X$ in the Grothendieck ring of varieties is an important motivic invariant of $X$. Among other things, the class $[X]$ carries information used to study birational equivalence problems \cite{LL03,NS19} and the bounded derived category $\Db(X)$ of coherent sheaves on $X$ \cite{Kaw02,KS18}. 

In \cite{Bor17}, Borisov proved that the Lefschetz motive $\LL$ is a zero-divisor in $\gravc$. This led to the difficult problem of computing the kernel of the morphism $\gravc\to \gravc[\LL^{-1}]$, which resulted in the definition of $L$-equivalence by Kuznetsov and Shinder \cite[\S 1.2]{KS18}:
\begin{definition} Let $X$ and $Y$ be two varieties over $k$.
    \begin{enumerate}
        \item We say that $X$ and $Y$ are \textit{$L$-equivalent} if there is a positive integer $r\in \Z_{>0}$ such that 
        \[
        \LL^r\cdot ([X]-[Y]) = 0 \in \gravk.
        \]
        \item We say that $X$ and $Y$ are \textit{$D$-equivalent} if there is a $k$-linear, exact equivalence of their bounded derived categories of coherent sheaves $\Db(X)\simeq \Db(Y)$. 
    \end{enumerate}
\end{definition}

A key question in this area is the following conjecture by Kuznetsov--Shinder:
\begin{conjecture}\cite[Conjecture 1.6]{KS18} \label{conj: D implies L}
    Let $X$ and $Y$ be smooth projective simply connected varieties. If $X$ and $Y$ are $D$-equivalent, then $X$ and $Y$ are $L$-equivalent:
    \[
    \Db(X)\simeq \Db(Y)\implies X\sim_LY.
    \]
\end{conjecture}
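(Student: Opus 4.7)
The plan is to extract, from an abstract $k$-linear derived equivalence $\Phi \colon \Db(X) \xrightarrow{\sim} \Db(Y)$, a sufficiently geometric correspondence that realises the required identity $\LL^{r}\cdot([X]-[Y]) = 0$ in $\gravk$. First I would apply Orlov's representability theorem to present $\Phi$ as a Fourier--Mukai transform with kernel $\calP \in \Db(X\times Y)$, and exploit the hypothesis that $X$ and $Y$ are smooth projective and simply connected to exclude the twisted/gerby behaviour that usually obstructs reconstruction: triviality of $\pi_1$ kills Albanese twists and forces $H^1(\bigo) = 0$, so one may hope that the cohomological realisation of $\calP$ already determines it up to a well-understood ambiguity.

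Second, I would set up $Y$ as a fine moduli space $M_X(v)$ of simple $\calP$-objects on $X$, and symmetrically $X$ as a moduli space on $Y$, using $\calP$ as the universal family. Known cases (abelian varieties, K3 surfaces, certain threefold flops, Mukai flops of $\Ktwo$-type manifolds) suggest that in the simply connected K-trivial setting one obtains a correspondence $Z \subset X\times Y$ equipped with maps $\pi_X \colon Z\to X$ and $\pi_Y \colon Z\to Y$ whose generic fibres are isomorphic projective spaces or, more generally, come from a common birational model. The goal of this step is to promote the abstract kernel $\calP$ to such a bona fide correspondence $Z$.

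The third step is a cut-and-paste argument on $Z$. One stratifies $X$ and $Y$ compatibly according to the jumping loci of the fibres of $\pi_X$ and $\pi_Y$, and shows that over each stratum both maps are Zariski-locally trivial fibrations with the same fibre class. Summing the scissor relations over the strata and cancelling a common $\LL^{r}$ factor coming from the fibre bundles then yields $\LL^{r}\cdot[X] = [Z] = \LL^{r}\cdot[Y]$ in $\gravk$. In favourable cases (e.g.\ the standard flop) this directly recovers the known $L$-equivalences in the literature and provides the template for the general argument.

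The main obstacle, which I expect to be the hardest part, is controlling the passage from an abstract Fourier--Mukai kernel to a usable geometric correspondence when $\calP$ is not the structure sheaf of a smooth subvariety. For general simply connected Calabi--Yau or hyperk\"ahler targets the kernel may have higher-rank, non-reduced, or disconnected support; the moduli-space description may only identify $Y$ birationally with a component of $M_X(v)$; and the birational modifications needed to pass between models need not decompose into pieces whose classes in $\gravk$ differ by a multiple of $\LL$. Closing this gap would require either a strong structural theorem about Fourier--Mukai kernels between simply connected K-trivial varieties, or a motivic refinement of Bondal--Orlov-type rigidity that ensures $[X]-[Y]$ is $\LL$-torsion without exhibiting an explicit correspondence. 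It is precisely at this step that I would expect the deepest input to be required.
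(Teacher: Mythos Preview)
The statement you are attempting to prove is a \emph{conjecture}, and the entire point of the paper is to \emph{disprove} it, not to prove it. Theorem~\ref{thm: D equivalence does not imply L equivalence Main Text} exhibits, for every $g\geq 2$, a pair of simply connected hyperk\"ahler manifolds $\olPic^{0}=M(0,H,1-g)$ and $\olPic^{g-1}=M(0,H,0)$ on a Picard-rank-one K3 surface which are $D$-equivalent (by \cite{ADM16}) but not $L$-equivalent. So any argument purporting to prove Conjecture~\ref{conj: D implies L} in general must contain a genuine error, not merely a hard step.

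Your proposal is honest about where the weak point lies, and the paper's counterexample pinpoints exactly why that step cannot be completed. You want to realise $Y$ as a \emph{fine} moduli space of $\calP$-objects on $X$ with a universal family, and then run a cut-and-paste argument on the correspondence. But in the paper's example $\olPic^{g-1}$ has coarseness $2g-2>1$ (Lemma~\ref{lem: basic results about coarseness}(iv)): the obstruction Brauer class is nontrivial, there is no universal sheaf, and the equivalence of \cite{ADM16} genuinely passes through twisted sheaves. More decisively, the paper shows that the order of the gluing group $G(X)=H^{2}(X,\Z)/(\NS(X)\oplus T(X))$ is an $L$-equivalence invariant (Lemma~\ref{lem: gluing subgroup order is L-invariant}) and computes $|G(\olPic^{0})|=\disc(T(S))/(g-1)\neq \disc(T(S))/(2g-2)=|G(\olPic^{g-1})|$. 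Thus no stratification of a putative correspondence $Z$ can possibly yield $\LL^{r}[X]=\LL^{r}[Y]$: the obstruction is Hodge-theoretic and visible already in $\gghs$. In short, the ``deepest input'' you anticipated in your final paragraph does not exist, because the conclusion is false.
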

We note that \cite{KS18} and \cite{IMOU20} initially conjectured independently that all $L$-equivalent smooth projective varieties must be $D$-equivalent, including non-simply connected ones. However, counterexamples were found by Efimov \cite{Efi18}, who proved that there exist pairs of abelian varieties that are $D$-equivalent but not $L$-equivalent. The conjecture was revised in \cite{KS18} to apply specifically to simply connected varieties. Conversely, the modification in \cite{IMOU20} reformulated the conjecture to require the $L$-equivalence equation within a specialized Grothendieck group of varieties, see \cite[Problem 7.2]{IMOU20}, which is partially proved in \cite{CLP23}.

Conjecture \ref{conj: D implies L} has been studied in many examples. For $L$-equivalent K3 surfaces, one can find examples in \cite{IMOU20,KS18,HL18,KKM20,SZ20}. In all these cases, the $L$-equivalent K3 surfaces are also derived equivalent. In \cite{Mei24}, building on the work of \cite{Efi18}, it was proved that very general $L$-equivalent K3 surfaces are $D$-equivalent, which is a proof of a special case of the converse of Conjecture \ref{conj: D implies L}. In the higher-dimensional setting, it was proved in \cite{Oka21} that if $S$ and $M$ are $D$-equivalent and $L$-equivalent K3 surfaces, then the Hilbert schemes $S^{[n]}$ and $M^{[n]}$ are also $D$-equivalent and $L$-equivalent, but not birational for certain values of $n$. We note that $S^{[n]}$ and $M^{[n]}$ can both be realised as \textit{fine} moduli spaces of stable sheaves on $S$.

The main result of this paper is the following theorem, which proves that for any $n\geq 2$, there exist complex hyperk\"ahler manifolds of \Kn-type that are $D$-equivalent but not $L$-equivalent. This disproves Conjecture \ref{conj: D implies L}.
\begin{theorem}[See Theorem \ref{thm: D equivalence does not imply L equivalence Main Text}] \label{thm: D equivalence does not imply L equivalence Intro}
     Let $(S,H)$ be a primitively polarised complex K3 surface of degree $H^2 = 2g-2 \geq 2$, and assume that $S$ has Picard rank 1. Then the moduli spaces of stable sheaves $\olPic^0\coloneqq M(0,H,1-g)$ and $\olPic^{g-1} \coloneqq M(0,H,0)$ are $D$-equivalent but not $L$-equivalent.
\end{theorem}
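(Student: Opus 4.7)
The plan has two parts, matching the two assertions. The $D$-equivalence I would establish by a Fourier--Mukai transform coming from the Beauville--Mukai Lagrangian fibration shared by $\olPic^0$ and $\olPic^{g-1}$; the failure of $L$-equivalence I would derive from an Efimov-style motivic invariant that is sharp enough to see the integral Hodge structure on $H^2$.

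For the $D$-equivalence, both $\olPic^0=M(0,H,1-g)$ and $\olPic^{g-1}=M(0,H,0)$ are smooth projective hyperk\"ahler manifolds of $\operatorname{K3}^{[g]}$-type, since the Mukai vectors $v_1=(0,H,1-g)$ and $v_2=(0,H,0)$ are primitive and satisfy $v_1^2=v_2^2=2g-2\geq 2$. Both fibre over $|H|\cong\PP^g$ as the Beauville--Mukai integrable system, with generic fibres the Jacobians $\Pic^0(C)$ and $\Pic^{g-1}(C)$ of the curves $C\in|H|$. A (possibly Brauer-twisted) Poincar\'e sheaf on $\olPic^0\times_{|H|}\olPic^{g-1}$ induces fibrewise the classical autoduality of Jacobians and globalises to a Fourier--Mukai equivalence $D^b(\olPic^0)\simeq D^b(\olPic^{g-1})$. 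Equivalently, the two are birational via translation by an auxiliary relative degree-$(g-1)$ line bundle on the universal curve over $|H|^{\sm}$, and birational hyperk\"ahler manifolds of $\operatorname{K3}^{[n]}$-type are known to be derived equivalent.

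To rule out $L$-equivalence, I would adapt Efimov's argument from \cite{Efi18}. Using Bittner's presentation of $\gravc$ by smooth projective varieties modulo blow-ups and disjoint unions, define the motivic measure
\[
\mu\colon \gravc \longrightarrow \gghsf, \qquad [X]\longmapsto \sum_i (-1)^i [H^i(X,\Z)],
\]
into the direct-sum Grothendieck group of integral polarised Hodge structures. The class $\mu(\LL)=[\Z(-1)]$ is invertible in the tensor-product ring structure on $\gghsf$, so $\mu$ factors through $\gravc[\LL^{-1}]$. An $L$-equivalence between $\olPic^0$ and $\olPic^{g-1}$ therefore forces equality in $\gghsf$; projecting onto the pure weight-$2$ summand gives $[H^2(\olPic^0,\Z)]=[H^2(\olPic^{g-1},\Z)]$ in $\gghs$, and taking the algebraic ($(1,1)$-type) sublattice yields $\disc\NS(\olPic^0)=\disc\NS(\olPic^{g-1})$, since discriminants multiply under orthogonal direct sums of polarised integral Hodge structures and hence descend to an invariant of stable $\oplus$-equivalence classes.

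To derive a contradiction I apply Mukai's isometry $H^2(M(v),\Z)\cong v^\perp$ inside the Mukai lattice $\tilde{\Lambda}_S=H^0\oplus H^2(S)\oplus H^4$, under which $\NS(M(v))$ is identified with the algebraic part of $v^\perp$. Using $\NS(S)=\Z H$ with $H^2=2g-2$, the perp-equations give
\[
\NS(\olPic^0)=\Span_\Z\bigl((-2,H,0),(0,0,1)\bigr),\qquad
\NS(\olPic^{g-1})=\Span_\Z\bigl((1,0,0),(0,0,1)\bigr),
\]
with Gram matrices $\left(\begin{smallmatrix}2g-2 & 2\\ 2 & 0\end{smallmatrix}\right)$ and $\left(\begin{smallmatrix}0 & -1\\ -1 & 0\end{smallmatrix}\right)$, of discriminants $-4$ and $-1$ respectively; since these differ, so do $\disc\NS(\olPic^0)$ and $\disc\NS(\olPic^{g-1})$, contradicting the consequence of $L$-equivalence derived above. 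The main obstacle will be the rigorous construction of $\mu$: scissor relations naturally encode exact sequences rather than direct-sum splittings at the cohomological level, so one must exploit Bittner's presentation by smooth projectives together with the semisimplicity of polarised pure Hodge structures to lift the blow-up formula to a genuine $\oplus$-identity in $\gghsf$.
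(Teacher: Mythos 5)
Your proposal has two genuine gaps, one in each half.

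For the $D$-equivalence: the twisted Poincar\'e sheaf on $\olPic^0\times_{|H|}\olPic^{g-1}$ is indeed the right mechanism (this is exactly the content of \cite{ADM16}, which the paper simply cites), but your ``equivalently, the two are birational via translation by a relative degree-$(g-1)$ line bundle'' is false, and in fact contradicts the very phenomenon the theorem exploits. When $\rho(S)=1$ there is a Brauer obstruction to such a relative line bundle on the universal curve, $\olPic^{g-1}$ is not a fine moduli space, and the two spaces are \emph{never} birational for $g>1$: the paper's Lemma \ref{lem: basic results about coarseness}(ii) shows coarseness is a birational invariant, while $\crs(\olPic^0)=g-1\neq 2g-2=\crs(\olPic^{g-1})$. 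So you must rely on the genuinely non-birational Fourier--Mukai equivalence of \cite{ADM16} and cannot reduce to the birational case.

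For the failure of $L$-equivalence: your lattice computations are correct ($\disc\NS(\olPic^0)=-4$, $\disc\NS(\olPic^{g-1})=-1$ up to sign, consistent with Lemma \ref{lem: discriminants of neron severi lattices}), but the step ``$[H^2(\olPic^0,\Z)]=[H^2(\olPic^{g-1},\Z)]$ in $\gghs$ implies $\disc\NS(\olPic^0)=\disc\NS(\olPic^{g-1})$'' does not go through. The objects of $\gghs$ are integral polarisable Hodge structures with \emph{no} chosen bilinear form, and equality of classes means $H_1\oplus H_3\simeq H_2\oplus H_3$ as Hodge structures for some $H_3$; such an isomorphism is not an isometry for any natural choice of forms, so $\disc\NS$ is simply not a well-defined function on $\gghs$. (You cannot fix this by passing to a Grothendieck group of polarised lattices with orthogonal sums, because Efimov's measure $\mu$ is only known to land in the form-free group: the blow-up decompositions are not orthogonal.) This is precisely why the paper introduces the gluing group $G(H)=H/(\NS(H)\oplus T(H))$: its order is defined purely from the Hodge structure, is additive under $\oplus$, hence descends to $\gghs$ (Lemma \ref{lem: gluing subgroup order is L-invariant}), and only \emph{afterwards} is it computed via lattice theory on the genuine cohomology lattice as $|G(X)|=\disc(T(X))/\crs(X)$ (Theorem \ref{thm: L equivalent same divisibility}). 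For your two spaces this gives $|G(\olPic^0)|=(2g-2)/(g-1)=2$ versus $|G(\olPic^{g-1})|=(2g-2)/(2g-2)=1$, which is the actual contradiction. Your closing worry about constructing $\mu$ is a red herring --- Efimov already did that --- the real issue is that your chosen invariant does not factor through $\mu$'s target.
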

The derived equivalence of $\olPic^0$ and $\olPic^{g-1}$ was proved in \cite{ADM16}. Our contribution is proving that they are not $L$-equivalent. 

\subsection*{Main ingredients of the proof of Theorem \ref{thm: D equivalence does not imply L equivalence Intro}}
A powerful tool in studying $L$-equivalence systematically is by using additive invariants that behave well under $L$-equivalence. By \textit{additive invariant}, we mean a group homomorphism $\gravk \to G$ for some group $G$.
The most important example for this paper is the group homomorphism 
\begin{equation}\label{eq:group homomorphism to hodge structures}
    \functionstar{\gravc}{\gghsf}{\text{ }[X]}{[H^\bullet(X,\Z)].}
\end{equation}

Here $\gghsf$ denotes the Grothendieck group of integral, graded, pure, polarisable Hodge structures. 
This group homomorphism has the important property that $[H^\bullet(X,\Z)] = [H^\bullet(Y,\Z)]$ in $\gghsf$ whenever $X$ and $Y$ are $L$-equivalent smooth projective varieties \cite{Efi18}. 
This fact was used in \cite{Efi18} to prove that derived equivalent abelian varieties need not be $L$-equivalent, and in \cite{Mei24} to prove that very general $L$-equivalent K3 surfaces are derived equivalent. 

In this paper, we introduce two birational invariants of a hyperk\"ahler manifold of \Kn-type $X$, which we call the \textit{gluing group}, denoted $G(X)$, and the \textit{coarseness of $X$}, which is a positive integer denoted $\crs(X)$. 

The gluing group of $X$ is defined to be the group 
\[
G(X)\coloneqq \frac{H^2(X,\Z)}{\NS(X)\oplus T(X)},
\] where $T(X)$ is the transcendental lattice of $X$. We prove that the order of this group is an additive invariant. Moreover, $L$-equivalent hyperk\"ahler manifolds $X$ and $Y$ satisfy $|G(X)| = |G(Y)|$. 

We define the coarseness $\crs(X)$ for any hyperk\"ahler manifold $X$ of \Kn-type to be the divisibility of a generator for the orthogonal complement of $H^2(X,\Z)$ in the Markman--Mukai lattice $\mmlatx$ (see Definition \ref{def: coarseness}). In the case when $X$ is a moduli space of sheaves on a K3 surface $S$, 
the coarseness of $X$ is equal to the order of the Brauer class $\alpha_X\in \Br(X)$ that obstructs the existence of a universal sheaf as described in \cite{Cal00,MM24}, see Lemma \ref{lem: basic results about coarseness}. Therefore, $X$ is a fine moduli space of sheaves on $S$ if and only if the coarseness of $X$ is 1.

Our main result linking the gluing group and coarseness to $L$-equivalence is the following:
\begin{theorem}[See Theorem \ref{thm: L equivalent same divisibility} and Corollary \ref{cor: L equivalent moduli spaces have same divisibility}] \label{thm: L equivalent same divisibility Intro}
    Let $X$ be a hyperk\"ahler manifold of \Kn-type, where $n\geq 2$. The positive integer 
    \[\frac{\disc(T(X))}{\crs(X)}\] equals $|G(X)|$ and therefore is invariant under $L$-equivalence. 
    In particular, if $M$ and $M'$ are $L$-equivalent moduli spaces of sheaves on the same K3 surface, then the orders of the obstruction Brauer classes $\alpha_M\in \Br(M)$ and $\alpha_{M'}\in \Br(M')$ are equal. 
\end{theorem}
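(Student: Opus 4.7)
The plan is to first establish the arithmetic identity
\[
|G(X)| \cdot \crs(X) = \disc(T(X))
\]
by a lattice-theoretic computation inside the Markman--Mukai lattice $\mmlatx$, and then to deduce the $L$-invariance of $\crs(X)$ by combining the identity with the paper's earlier $L$-invariance results for $|G(X)|$ and for the integral Hodge structure $T(X)$.

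Setup: let $\alpha$ be a primitive generator of the rank-one sublattice $H^2(X,\Z)^\perp \subset \mmlatx$, so that $H^2(X,\Z) = \alpha^\perp$ and $\crs(X)$ equals the divisibility of $\alpha$ in $\mmlatx$. Let $\widetilde{\NS}(X) \subset \mmlatx$ denote the algebraic (type $(1,1)$) sublattice, which contains $\alpha$, and let $T(X) = \widetilde{\NS}(X)^\perp \subset \mmlatx$. A direct verification shows that this $T(X)$ coincides with the transcendental lattice of $X$ inside $H^2(X,\Z)$, and that $\NS(X) = \alpha^\perp \cap \widetilde{\NS}(X)$. Consequently $G(X) = \alpha^\perp/(\NS(X)\oplus T(X))$.

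Main calculation: the orthogonal projection $\pi \colon \mmlatx \otimes \Q \twoheadrightarrow T(X) \otimes \Q$ maps $\mmlatx$ into $T(X)^\vee$ (since pairing with $T(X)$ takes integer values), and the structural properties of $\mmlatx$ imply $\pi(\mmlatx) = T(X)^\vee$. Composing with the quotient $T(X)^\vee \twoheadrightarrow T(X)^\vee/T(X)$ yields a short exact sequence
\[
0 \to \widetilde{\NS}(X) \oplus T(X) \to \mmlatx \to T(X)^\vee / T(X) \to 0.
\]
Since $T(X) \subset \alpha^\perp$, restricting this surjection to $\alpha^\perp$ has kernel $\NS(X) \oplus T(X)$, yielding an inclusion $G(X) \hookrightarrow T(X)^\vee/T(X)$. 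A direct diagram chase identifies its cokernel with $\mmlatx/(\alpha^\perp + \widetilde{\NS}(X))$. Pairing with $\alpha$ gives an isomorphism of $\mmlatx/\alpha^\perp$ with a rank-one abelian group, under which the image of $\widetilde{\NS}(X)$ becomes a subgroup of index $\crs(X)$, essentially by the definition of $\crs(X)$. Combining gives
\[
|G(X)| \cdot \crs(X) = |T(X)^\vee / T(X)| = \disc(T(X)).
\]

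Conclusion: the discriminant $\disc(T(X))$ depends only on the isomorphism class of the integral Hodge structure $T(X)$, which is preserved under $L$-equivalence via the additive invariant $\gghsf$. Combined with the $L$-invariance of $|G(X)|$ already established in the paper, the identity shows $\crs(X)$ is also $L$-invariant. The final assertion for moduli spaces on a K3 surface follows immediately from Lemma \ref{lem: basic results about coarseness}, which identifies $\crs(M)$ with $\ord(\alpha_M) \in \Br(M)$. The main technical obstacle is the identification of the cokernel as $\Z/\crs(X)\Z$: this requires understanding how the divisibility of $\alpha$ in $\mmlatx$ governs the embedding of $\widetilde{\NS}(X)$, and makes essential use of the specific structural properties of the Markman--Mukai lattice.
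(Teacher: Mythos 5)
Your lattice-theoretic derivation of the identity $|G(X)|\cdot\crs(X)=\disc(T(X))$ is correct, and it takes a genuinely different route from the paper. The paper (Theorem \ref{thm: L equivalent same divisibility}) applies the discriminant--index formula of Lemma \ref{lem: gluing subgroup order} twice --- once to $\NS(X)\oplus T(X)\subset H^2(X,\Z)$ and once, inside the proof of Lemma \ref{lem: discriminants of neron severi lattices}, to $\NS(X)\oplus\Z v\subset\mmlatx^{1,1}$ --- computes $\disc(\NS(X))$ explicitly, and extracts the answer by taking a square root. You instead project to the transcendental side: the unimodularity of $\mmlatx$ gives the exact sequence $0\to\mmlatx^{1,1}\oplus T(X)\to\mmlatx\to A_{T(X)}\to 0$, restriction to $\alpha^\perp=H^2(X,\Z)$ realises $G(X)$ as a subgroup of $A_{T(X)}$, and pairing with $\alpha$ identifies the cokernel with $\Z/\crs(X)\Z$. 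This is cleaner: it avoids computing $\disc(\NS(X))$ altogether, produces the identity as a single index count rather than as a square root of a product of discriminants, and in fact refines the paper's statement by exhibiting $G(X)$ as an index-$\crs(X)$ subgroup of $A_{T(X)}$ rather than merely computing its order. All the intermediate verifications (the kernel of the restricted map being $\NS(X)\oplus T(X)$, the surjectivity of $\pi$ onto $T(X)^\vee$, the index of the image of $\mmlatx^{1,1}$ under $(\alpha\cdot-)$) check out.

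However, your concluding deduction contains a genuine error. You assert that $\disc(T(X))$ ``depends only on the isomorphism class of the integral Hodge structure $T(X)$, which is preserved under $L$-equivalence,'' and from this you conclude that $\crs(X)$ is itself an $L$-invariant. Neither claim is justified: the additive invariant \eqref{eq:group homomorphism to hodge structures} takes values in $\gghsf$, the Grothendieck group of Hodge \emph{structures}, which retains no information about the bilinear form, so $L$-equivalence only yields $[T(X)]=[T(Y)]$ in $\gghs$ and says nothing about $\disc(T(X))$ as a lattice invariant. (Indeed, the paper explicitly lists the $L$-invariance of $\disc(T(X))$, and hence of $\crs(X)$, only as a \emph{prediction} of Conjecture \ref{conj: L implies D (may still be true)}, not as a theorem.) What the statement actually asserts --- and what follows correctly from your identity --- is only that the \emph{ratio} $\disc(T(X))/\crs(X)=|G(X)|$ is $L$-invariant, by Lemma \ref{lem: gluing subgroup order is L-invariant}, which you do cite. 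Relatedly, the final assertion about moduli spaces does not ``follow immediately'' from Lemma \ref{lem: basic results about coarseness} alone: one needs the additional input that $T(M)$ and $T(M')$ are both Hodge-isometric to $T(S)$ (as in Corollary \ref{cor: L equivalent moduli spaces have same divisibility}), so that $\disc(T(M))=\disc(T(M'))$ and the equality $|G(M)|=|G(M')|$ can be converted into $\crs(M)=\crs(M')$. With these two corrections the argument is complete.
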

Theorem \ref{thm: D equivalence does not imply L equivalence Intro} is an application of Theorem \ref{thm: L equivalent same divisibility Intro}. 

\subsection*{Open problems}

We propose the following conjecture, which is a partial converse of Conjecture \ref{conj: D implies L}.

\begin{conjecture} \label{conj: L implies D (may still be true)}
    Let $X$ and $Y$ be projective hyperk\"ahler manifolds. Assume that $[X]-[Y]\in \gravk$ is annihilated by a power of $\LL$, i.e. $X$ and $Y$ are $L$-equivalent. Then the bounded derived categories of coherent sheaves of $X$ and $Y$ are equivalent:
    \[
    X\sim_LY\implies \Db(X)\simeq \Db(Y).
    \]
\end{conjecture}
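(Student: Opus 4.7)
The plan is to imitate the K3-surface strategy of \cite{Mei24}---which handles the case $n=1$ of this conjecture for very general $L$-equivalent K3 surfaces---and to combine it with the new invariants furnished by Theorem \ref{thm: L equivalent same divisibility Intro}. I would focus first on the \Kn-type case; the non-\Kn deformation types require separate treatment and will be flagged as the principal obstruction.

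First I would invoke Efimov's theorem \cite{Efi18}: if $X\sim_L Y$, then $[H^\bullet(X,\Z)]=[H^\bullet(Y,\Z)]$ in $\gghsf$. Restricting to weight $2$, splitting off the N\'eron--Severi summand (a sum of Tate twists), and using that the transcendental Hodge structure of a projective hyperk\"ahler manifold is simple of K3 type with $h^{2,0}=1$, a cancellation argument in $\gghsf$ of the type carried out in \cite[\S 5]{Mei24} should yield a rational Hodge isomorphism $T(X)_\Q\simeq T(Y)_\Q$. I would then upgrade this to an integral Hodge isometry $T(X)\simeq T(Y)$: Theorem \ref{thm: L equivalent same divisibility Intro} fixes $\disc(T(X))/\crs(X) = \disc(T(Y))/\crs(Y)$, so combined with a genus-theoretic analysis at the primes dividing the discriminant, the rational isomorphism should refine to an integral one, together with a matching of the coarseness-controlled gluing data encoded in $\mmlatx$.

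Second, one must convert this lattice-theoretic data into an actual derived equivalence. For \Kn-type manifolds, Markman's extended Mukai lattice, combined with derived-invariance results for hyperk\"ahler manifolds due to Taelman, Beckmann and Markman--Mehrotra, suggests that a Hodge isometry of the full Markman--Mukai lattice $\mmlatx$ should arise from a Fourier--Mukai kernel, at least after birational modification; the equality of coarseness then ensures that $X$ and $Y$ can be realised as twisted moduli spaces on a common (twisted) K3 category with compatible Brauer classes, from which a derived equivalence would follow in the spirit of \cite{Oka21}. The hard part will be precisely this last step: a derived global Torelli theorem for projective hyperk\"ahler manifolds is not available in the generality needed, and realising a prescribed Hodge isometry of $\mmlatx$ by a geometric kernel is known only in restricted situations. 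The non-\Kn deformation types (generalised Kummer, OG6, OG10) will present a further and likely more serious obstacle, since no Markman--Mukai framework of comparable strength is currently available there.
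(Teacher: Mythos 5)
The statement you are trying to prove is Conjecture \ref{conj: L implies D (may still be true)}: it is posed in the paper as an \emph{open problem}, and the paper offers no proof of it, only supporting evidence (the case of very general K3 surfaces from \cite{Mei24}, the consistency checks via \cite{Bec22}, and Corollary \ref{cor: L equivalence preserves fineness} combined with \cite{Zha25}). So there is no argument in the paper to compare yours against, and your proposal should be judged on its own terms --- as which it is a reasonable research programme but not a proof, as you yourself concede at two critical points.

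Concretely, the gaps are these. First, the passage from $[H^\bullet(X,\Z)]=[H^\bullet(Y,\Z)]$ in $\gghsf$ to an \emph{integral} Hodge isometry $T(X)\simeq T(Y)$ is exactly the step that forces the ``very general'' hypothesis in \cite{Mei24}: cancellation in $\gghs$ gives at best an isomorphism of Hodge structures after restricting to suitable loci, and a rational Hodge isometry of transcendental lattices does not refine to an integral one merely from knowing $\disc(T(X))/\crs(X)=\disc(T(Y))/\crs(Y)$ --- the genus of a lattice is determined by its discriminant form, not its discriminant, and a genus may contain several isometry classes. Note also that the paper derives the predicted Hodge isometry $T(X)\simeq T(Y)$ as a \emph{consequence} of the conjecture via \cite[Corollary 9.3]{Bec22}, not as an available intermediate step. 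Second, and more fatally, there is no derived Torelli theorem for hyperk\"ahler manifolds of $\Kn$-type: no result currently realises a prescribed Hodge isometry of $\mmlatx$ by a Fourier--Mukai kernel outside of special situations (moduli spaces on a common K3, as in \cite{ADM16,Oka21,Zha25}), and you flag this yourself. Third, the non-$\Kn$ deformation types are untouched. Since each of these three steps is an open problem in its own right, the proposal does not establish the conjecture; it is a plan whose every load-bearing step is missing.
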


We already discussed evidence for Conjecture \ref{conj: L implies D (may still be true)} above. All currently known examples of $L$-equivalent K3 surfaces and hyperk\"ahler manifolds are $D$-equivalent as well \cite{IMOU20,KS18,HL18,KKM20,SZ20}. Moreover, Conjecture \ref{conj: L implies D (may still be true)} was established in \cite{Mei24} when $X$ and $Y$ are very general K3 surfaces. The results of this paper also support Conjecture \ref{conj: L implies D (may still be true)}. For example, Corollary \ref{cor: L equivalence preserves fineness} asserts that if $M$ and $M'$ are $L$-equivalent moduli spaces of stable sheaves on some K3 surface $S$, then $M$ is a fine moduli space if and only if $M'$ is. Thus, in this case we also have an equivalence $\Db(M)\simeq \Db(M')$ by a recent result of Zhang \cite[Corollary 1.5]{Zha25}.

Conjecture \ref{conj: L implies D (may still be true)} has some interesting implications for $L$-equivalence of moduli spaces of sheaves on K3 surfaces. For example by \cite[Corollary 9.6]{Bec22}, it predicts a positive answer to the following question:
\begin{question}
    Suppose $X$ is a hyperk\"ahler manifold of \Kn-type which is $L$-equivalent to a moduli space of stable sheaves $M$ on a K3 surface $S$. Is $X$ itself isomorphic to a moduli space of stable sheaves on $S$?
\end{question}

Additionally, Conjecture \ref{conj: L implies D (may still be true)} predicts the existence of a Hodge isometry $T(X)\simeq T(Y)$ for any pair of $L$-equivalent hyperk\"ahler manifolds $X$ and $Y$ of \Kn-type by \cite[Corollary 9.3]{Bec22}. In particular, $\disc(T(X))$ is predicted to be invariant under $L$-equivalence, and therefore we should also have $\crs(X) = \crs(Y)$. 

\subsection*{Acknowledgements} 
I would like to thank Evgeny Shinder for our discussions that never fail to inspire me, and for his useful comments on an earlier draft of this paper. I also want to thank Felix K\"ung, Hannah Dell, Dominique Mattei, Corey Brooke, Sarah Frei, and Lisa Marquand for useful discussions and interest in my work, and Hsueh-Yung Lin, Christopher Nicol and Wahei Hara for pointing out counterexamples to an earlier version of Conjecture \ref{conj: L implies D (may still be true)}.

\section{The Grothendieck group of Hodge structures}
We recall some important facts about Grothendieck groups of Hodge structures. Our main reference is \cite{Efi18}.

We denote by $\gghsf$ the Grothendieck group of  Hodge structures. That is, $\gghsf$ is the free abelian group generated by isomorphism classes of integral, graded, pure, polarisable Hodge structures, subject to the relations 
\[
[H_1\oplus H_2] = [H_1] + [H_2] \qquad \text{for all } H_1,H_2\in \operatorname{HS}_\Z. 
\]
Similarly, we denote by $\gghs$ the Grothendieck group of integral, pure, polarisable Hodge structures of weight 2. We have the natural group homomorphism 
\[
\functionstar{\gghsf}{\gghs}{\text{ }[H^\bullet]}{[H^2].}
\]

\subsection{Transcendental Hodge substructures and gluing groups}
Given a pure, integral, polarisable Hodge structure $H\in \operatorname{HS}_{\Z,2}$ of weight 2, we denote its transcendental Hodge substructure by $T(H)$. Explicitly, $T(H)$ is defined to be the minimal integral primitive Hodge substructure of $H$ such that $T(H)_\CC$ contains $H^{2,0}$. 
That is, 
\[
T(H) = \Big(\bigcap_{V\subset H_\Q}V\Big) \cap H,
\]
where the intersection runs over all rational Hodge substructures $V$ of $H_\Q$ such that $V^{2,0} = H^{2,0}$. Since $H_\Q$ is a finite-dimensional $\Q$-vector space, we have $T(H)_\CC = \bigcap_{V}V_\CC$, from which it follows that $T(H)$ inherits a Hodge structure from $H$ by setting $T(H)^{p,q} = \bigcap_V V^{p,q}$.

Equivalently, we may define the transcendental Hodge substructure of $H$ as follows. The rational Hodge structure $H_\Q$ decomposes into a direct sum of simple, polarisable, rational Hodge structures $H_\Q \simeq \bigoplus_{i = 1}^n V_i$, since the category of rational, pure, polarisable Hodge structures of weight 2 is semi-simple, see for example \cite[\S II.7, Lemma 7.26]{Voi02}. Then we define $T_\Q$ to be the direct sum of those $V_i$ for which we have $V_i\cap H^{2,0} \neq 0$, and finally we put $T\coloneqq T_\Q\cap H$.  
This description of $T(H)$ makes it clear that for any two Hodge structures $H_1,H_2\in \operatorname{HS}_{\Z,2}$, we have $T(H_1\oplus H_2) = T(H_1)\oplus T(H_2)$. Moreover, any morphism of Hodge structures $f\colon H_1\to H_2$ restricts to a morphism of transcendental Hodge substructures $T(H_1)\to T(H_2)$. Therefore, taking the transcendental Hodge substructure defines an additive functor $\operatorname{HS}_{\Z,2}\to \operatorname{HS}_{\Z,2}$ and thus a group homomorphism 
\[
\functionstar{\gghs}{\gghs}{\text{ }[H]}{[T(H)].}
\]

We denote $\NS(H)\coloneqq H^{1,1}\cap H$. After fixing a polarisation $\psi$ on $H$, the orthogonal complement $\NS(H)^\perp$ with respect to $\psi$ is a Hodge substructure of $H$ for which we have $H^{2,0} = (\NS(H)^\perp)^{2,0}$. This shows that $\NS(H)\cap T(H) = 0$, provided $H$ is polarisable. On the other hand, if $H$ is non-polarisable, it is possible that $\NS(H)$ and $T(H)$ intersect non-trivially \cite[Example 3.3.2]{Huy16}.

For a smooth, projective variety $X$, we write \[T(X)\coloneqq T(H^2(X,\Z)).\] Since \eqref{eq:group homomorphism to hodge structures} maps $L$-equivalent smooth projective varieties to the same element of $\gghsf$, it follows that $L$-equivalent smooth projective varieties $X$ and $Y$ must satisfy $[T(X)] = [T(Y)]$ in $\gghs$. 

If $X$ is a K3 surface or a hyperk\"ahler manifold of \Kn-type, then $T(X) = \NS(X)^\perp \subset H^2(X,\Z)$ \cite[Lemma 3.3.1]{Huy16}, and thus $\NS(X)\oplus T(X)\subset H^2(X,\Z)$ is a subgroup of finite index. 

\begin{definition} \label{def: gluing group}
    For an integral, polarisable Hodge structure $H$ of weight 2, we denote the \textit{gluing group} of $H$ by
    \[G(H)\coloneqq \frac{H}{\NS(H) \oplus T(H)},\] 
    where $T(H)$ is the transcendental Hodge substructure of $H$, and $\NS(H) \coloneqq H^{1,1}\cap H$. When $X$ is a smooth projective variety, we write $$G(X) \coloneqq G(H^2(X,\Z)).$$
\end{definition}
If $H_1,H_2\in \operatorname{HS}_{\Z,2},$ then we have $G(H_1\oplus H_2)\simeq G(H_1)\oplus G(H_2).$ 
Combined with \eqref{eq:group homomorphism to hodge structures}, this proves the following key lemma (see also \cite[\S 4]{Mei24}):
\begin{lemma} \label{lem: gluing subgroup order is L-invariant}
    We have a well-defined group homomorphism 
    \begin{equation} \label{eq: gluing group homomorphism}
    \functionstar{\gghs}{\Q^\times}{\text{ }[H]}{|G(H)_{\text{tors}}|.}
    \end{equation}
    Moreover, if $X$ and $Y$ are $L$-equivalent hyperk\"ahler manifolds, then $G(X)$ and $G(Y)$ are finite, and we have $$|G(X)| = |G(Y)|.$$
\end{lemma}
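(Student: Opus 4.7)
The plan is to establish the homomorphism by verifying that the gluing-group construction is additive on direct sums, and then to feed this additivity into the additive invariant $\gravc \to \gghsf$ recalled in the excerpt.

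First I would verify that for any $H_1, H_2 \in \operatorname{HS}_{\Z,2}$ one has $G(H_1 \oplus H_2) \simeq G(H_1)\oplus G(H_2)$. This reduces to the corresponding identities for $\NS$ and $T$: the first, $\NS(H_1\oplus H_2) = \NS(H_1)\oplus \NS(H_2)$, is immediate from additivity of the Hodge decomposition, and the second, $T(H_1\oplus H_2) = T(H_1)\oplus T(H_2)$, is already noted before the lemma. Next I would argue that $G(H)$ is in fact always finite. Decomposing $H_\Q = \bigoplus_i V_i$ into simple polarisable rational weight-$2$ Hodge substructures, every $V_i$ with $V_i^{2,0} = 0$ is purely of type $(1,1)$ and so lies in $\NS(H)_\Q$, while the remaining $V_i$ together form $T(H)_\Q$. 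Thus $H_\Q = \NS(H)_\Q \oplus T(H)_\Q$, so $\NS(H)\oplus T(H)$ is of full rank in $H$ and the quotient $G(H)$ is a finite abelian group, giving $|G(H)_{\mathrm{tors}}| = |G(H)| \in \Z_{>0}$. Combined with the direct-sum compatibility above, the assignment $[H]\mapsto |G(H)|$ converts the defining relations of $\gghs$ into multiplicative equalities and therefore defines the desired homomorphism into $(\Q^\times,\cdot)$.

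For the second assertion, suppose $X$ and $Y$ are $L$-equivalent hyperk\"ahler manifolds. The additive invariant \eqref{eq:group homomorphism to hodge structures} yields $[H^\bullet(X,\Z)] = [H^\bullet(Y,\Z)]$ in $\gghsf$; composing with the projection $\gghsf \to \gghs$, $[H^\bullet]\mapsto [H^2]$, and then with the homomorphism just constructed forces $|G(X)| = |G(Y)|$, while finiteness of both sides is the general statement above applied to the polarisable Hodge structure $H^2(-,\Z)$. I do not anticipate a real obstacle: the one slightly delicate point is the orthogonal decomposition $H_\Q = \NS(H)_\Q \oplus T(H)_\Q$ afforded by polarisability, which is precisely what makes $G(H)$ finite and lets the homomorphism land in $\Q^\times$ without ever really needing the $\mathrm{tors}$ safeguard.
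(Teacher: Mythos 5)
Your proposal is correct and follows essentially the same route as the paper: the key point in both is the additivity $G(H_1\oplus H_2)\simeq G(H_1)\oplus G(H_2)$ (reduced to the corresponding statements for $\NS$ and $T$), which makes $[H]\mapsto |G(H)_{\mathrm{tors}}|$ a well-defined homomorphism on $\gghs$, and the second assertion then follows by composing the invariant \eqref{eq:group homomorphism to hodge structures} with $[H^\bullet]\mapsto [H^2]$. The only difference is that you additionally argue $G(H)$ is finite for \emph{every} polarisable $H$ via the decomposition $H_\Q = \NS(H)_\Q\oplus T(H)_\Q$ --- which is fine for effective weight-$2$ structures, the case of interest --- whereas the paper keeps the $\mathrm{tors}$ safeguard in general and only invokes finiteness for hyperk\"ahler manifolds, where $T(X)=\NS(X)^\perp$ makes it immediate.
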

\begin{proof}
    Let $H_1,H_2\in \operatorname{HS}_{\Z,2}$. Then, since we have $G(H_1\oplus H_2)\simeq G(H_1)\oplus G(H_2),$ it follows that $G(H_1\oplus H_2)_\mathrm{tors} \simeq G(H_1)_\mathrm{tors} \oplus G(H_2)_\mathrm{tors}$. This shows that the map \eqref{eq: gluing group homomorphism} is well-defined. Moreover, it is clearly a group homomorphism. 
\end{proof}
The group homomorphism in Lemma \ref{lem: gluing subgroup order is L-invariant} factors through the Grothendieck group $K_0^\oplus(\Z)$ of the category of finitely generated abelian groups, with relations given by split exact sequences. The homomorphism 
\[
\functionstar{K_0^\oplus(\Z)}{\Q^\times}{A}{|A_{\text{tors}}|}
\]
was also used in \cite{BGLL21}. It is important to note that the relations in $\gghs$ are given by direct sums, and not by short exact sequences, see \cite[Definition 6.9 and Lemma 6.12]{BGLL21}. 

\subsection{Hodge lattices of K3-type} Our main reference for lattices is \cite{Nik80}.
A pure, effective, integral, polarisable Hodge structure $H$ of weight 2 is said to be \textit{of K3-type} if $H^{2,0}\simeq \CC$ \cite[Definition 3.2.3]{Huy16}.

A \textit{lattice} is a free, finitely generated abelian group $L$ together with a non-degenerate integral symmetric bilinear form $b\colon L\times L\to \Z$. We usually write $x\cdot y\coloneqq b(x,y)$ and $x^2\coloneqq b(x,x)$ for $x,y\in L$. We assume all our lattices to be \textit{even}, that is, $x^2$ is even for all $x \in L$. 

For a vector $x\in L$, we call the positive integer 
\[
\dv(x)\coloneqq \gcd_{y\in L}(x\cdot y)
\]
the \textit{divisibility} of $x$. 
We denote by $L^* = \Hom(L,\Z)$ the \textit{dual} of $L$. There is a natural embedding $L\hookrightarrow L^*$ given by the bilinear form, which is injective because we assume $L$ to be non-degenerate. The quotient $A_{L}\coloneqq L^*/L$ is called the \textit{discriminant group} of $L$. This is a finite abelian group, and we call the order of $A_L$ the \textit{discriminant} of $L$: 
\[
\disc(L)\coloneqq |A_L|.
\]
We call $L$ \textit{unimodular} if $\disc(L) = 1$.
The discriminant group comes equipped with a natural quadratic form $q\colon A_L\to \Q/2\Z$. 

If $L$ is a lattice, we will write $L(-1)$ for the lattice with the same underlying group as $L$, but whose bilinear form is multiplied by $-1$. Similarly, we write $A_L(-1)$ for the finite quadratic module whose underlying group is $A_L$ and whose quadratic form is flipped by a sign.

For a primitive vector $x\in L$, the element $$\frac{x}{\dv(x)}$$ determines an element of $A_L$ of order $\dv(x)$. Therefore, 
\begin{equation}\label{eq: divisibility divides discriminant}
    \dv(x)\mid\disc(L).
\end{equation}

A morphism of lattices is called a \textit{metric morphism}, and an isomorphism of lattices is called an \textit{isometry}. Note that metric morphisms are injective, as we assume our lattices to be non-degenerate. A metric morphism $N\hookrightarrow L$ is called a \textit{primitive embedding} if the quotient $L/N$ is torsion-free.

\begin{lemma}\cite[Proposition 1.5.1]{Nik80} \label{lem: discriminants in unimodular lattices}
    Let $H$ be a lattice, and let $N\subset H$ be a primitive sublattice with orthogonal complement $T\coloneqq N^\perp \subset H$. Write $G\coloneqq \frac{H}{N\oplus T}$ for the gluing group. Then there are natural group homomorphisms
    \[
    \xymatrix{
    & G \ar[dl]_{i_N} \ar[dr]^{i_T}&\\
    A_N && A_T
    }
    \]
    which are injective. Moreover, if $H$ is unimodular, then $i_N$ and $i_T$ are group isomorphisms.
\end{lemma}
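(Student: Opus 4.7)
The plan is to build $i_N$ (and by symmetry $i_T$) explicitly from the bilinear form, check injectivity via the definition of orthogonal complement, and then invoke both primitivity of $N$ and unimodularity of $H$ to get surjectivity in the last part.

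First, I would define $i_N$ as follows. For $h\in H$, the assignment $n\mapsto h\cdot n$ is an element $\phi_h\in N^*$, giving a homomorphism $H\to N^*$ whose kernel is by definition $N^\perp = T$. Composing with the projection $N^*\twoheadrightarrow A_N = N^*/N$ and noting that the image of $N\subset H$ in $N^*$ is exactly $N$ (regarded as a sublattice of its dual), one obtains a group homomorphism $H\to A_N$ that vanishes on both $N$ and $T$, hence factors through $G = H/(N\oplus T)$. Note that the sum $N\oplus T\hookrightarrow H$ really is a direct sum, since $N$ is non-degenerate forces $N\cap N^\perp = 0$, and has finite index because $\dim_\Q N_\Q + \dim_\Q T_\Q = \dim_\Q H_\Q$.

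Next I would prove injectivity. Suppose $i_N([h]) = 0$, i.e.\ $\phi_h\in N\subset N^*$. Then there exists $n_0\in N$ such that $h\cdot n = n_0\cdot n$ for every $n\in N$, which says $h-n_0\in N^\perp = T$. Therefore $h = n_0 + (h-n_0)\in N\oplus T$, so $[h] = 0$ in $G$. For the symmetric map $i_T$ one repeats the construction with the roles of $N$ and $T$ swapped; this uses the identity $T^\perp = (N^\perp)^\perp = N$, which holds precisely because $N$ is primitive in $H$ (the equality holds after tensoring with $\Q$ by non-degeneracy, and the left-hand side is automatically primitive, so it coincides with $N$).

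Finally, I would prove the unimodular statement. The key observation is that primitivity of $N$ makes the short exact sequence
\[
0\longrightarrow N\longrightarrow H\longrightarrow H/N\longrightarrow 0
\]
split (the quotient is torsion-free, hence free), so the restriction map $\Hom(H,\Z)\to\Hom(N,\Z)$ is surjective. Given any $\phi\in N^*$, lift it to $\tilde\phi\in H^*$. Unimodularity of $H$ means that the bilinear form identifies $H\xrightarrow{\sim}H^*$, so $\tilde\phi$ is represented by some $h\in H$ with $\phi(n) = h\cdot n$ for all $n\in N$; this element satisfies $i_N([h]) = [\phi]$, and by symmetry the same argument proves surjectivity of $i_T$.

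The only genuinely non-formal step is the last one: injectivity is essentially a tautology once the map is written down, but surjectivity requires combining two independent inputs — primitivity of $N$ (to split off $N$ as a summand of $H$, thereby extending functionals) and unimodularity of $H$ (to realise every functional by an element of $H$). I expect this combination to be the main conceptual content of the lemma; everything else is bookkeeping with the definitions of $N^\perp$, $A_N$, and the gluing group $G$.
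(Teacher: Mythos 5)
Your proof is correct and follows essentially the same route as the paper: define $i_N$ via $h\mapsto (h\cdot-)|_N\in N^*\to A_N$, check it kills $N\oplus T$, get injectivity from $N^\perp=T$, and obtain surjectivity by extending functionals (primitivity) and realising them by elements of $H$ (unimodularity). You are in fact slightly more careful than the paper in spelling out why the symmetric argument for $i_T$ needs $T^\perp=N$, which the paper dismisses as ``symmetry of the situation.''
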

\begin{proof}
    We will prove the claims for $i_N$, and then the result for $i_T$ follows by the symmetry of the situation. For an element $x\in H$, write $\overline{x}$ for its image in $G$. The element $x$ defines a group homomorphism $(x\cdot -)|_N\colon N\to \Z$, which we view as an element of $N^*$, and thus it induces an element of $A_N$, which we shall call $i_N(\overline{x})$. The construction of $i_N(\overline{x})$ is readily seen to be independent of the choice of a lift $x$ of $\overline{x}$, so we obtain a group homomorphism $i_N\colon G\to A_N$. 
    
    If two elements $x,y\in H$ satisfy $i_N(\overline{x}) = i_N(\overline{y})$, then by construction there is an element $v\in N$ such that $(x\cdot -)|_N - (y\cdot -)|_N = (v\cdot -)$. Therefore, $x-y-v\in N^\perp = T$, so that we may write $x-y = v+ t$ for some $t\in T$. But this precisely means that $\overline{x} = \overline{y}$, so $i_N$ is injective. 

    Let now assume that $H$ is unimodular. Since $N\subset H$ is a primitive sublattice, we may extend any group homomorphism $f\colon N\to \Z$ to a group homomorphism $f_H\colon H\to \Z$. Since $H$ is unimodular, there exists an element $x\in H$ such that $(x\cdot -) = f_H$. In this case, $i_N(\overline{x})$ is equal to the class of $f$ in $A_N$, so $i_N$ is surjective and therefore an isomorphism. 
\end{proof}

In the notation of Lemma \ref{lem: discriminants in unimodular lattices}, the composition $$i_N(G) \overset{i_N^{-1}}{\longrightarrow} G \overset{i_T}{\longrightarrow} i_T(G)$$ is an anti-isometry, i.e. we have $i_N(G) \simeq i_T(G)(-1)$ \cite[Proposition 1.5.1]{Nik80}. 
Moreover, using this anti-isometry, we can recover $H$ via
\[
H \simeq \left\{ (x,y)\in N^*\oplus T^* \mid \overline{x}\in i_N(G) \text{ and } \gamma(\overline{x}) = \overline{y} \right\}.
\]
This explains why we call $G$ the gluing group. 

\begin{lemma}\label{lem: gluing subgroup order}
   Let $H$ be a lattice, and let $N\subset H$ be a primitive sublattice with orthogonal complement $T\coloneqq N^\perp \subset H$. Write $G\coloneqq \frac{H}{N\oplus T}$ for the gluing group. We have \[\disc(T)\cdot \disc(N) = |G|^2\cdot \disc(H).\]
\end{lemma}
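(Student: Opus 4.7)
The plan is to compare the discriminants of $H$ and $N \oplus T$ via their inclusion as lattices of equal rank, and then use additivity of the discriminant over orthogonal direct sums.

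First I would verify that $N \oplus T \subset H$ is a finite-index sublattice of index exactly $|G|$. Since $H$ is non-degenerate and $N$ is primitive, we have $\rk(N) + \rk(T) = \rk(H)$ and $N \cap T = 0$ inside $H$, so $N \oplus T \hookrightarrow H$ is a primitive-rank inclusion; the quotient $H/(N\oplus T)$ is therefore torsion of order $|G|$ by definition of the gluing group.

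Next I would invoke the standard formula relating discriminants under a finite-index inclusion of lattices of the same rank: if $L' \subset L$ has index $m$, then $\disc(L') = m^2 \cdot \disc(L)$. This follows from the identification $\disc(L) = |\det(\text{Gram matrix})|$ and the fact that if the basis of $L'$ is expressed in a basis of $L$ by a matrix $M$ with $|\det M| = m$, then the Gram matrix of $L'$ is $M^{T} G_L M$, so taking determinants gives the desired relation. Applied to $L = H$ and $L' = N \oplus T$, this yields
\[
\disc(N \oplus T) = |G|^2 \cdot \disc(H).
\]

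Finally I would observe that the discriminant is multiplicative on orthogonal direct sums: the dual of $N \oplus T$ is $N^* \oplus T^*$ and the discriminant group decomposes as $A_{N\oplus T} \simeq A_N \oplus A_T$, so $\disc(N\oplus T) = \disc(N)\cdot \disc(T)$. Combining this with the previous step gives
\[
\disc(N) \cdot \disc(T) = |G|^2 \cdot \disc(H),
\]
as claimed. There is no real obstacle here; the only point requiring care is the verification that $N \oplus T$ and $H$ have the same rank, which is where the primitivity hypothesis on $N$ (and non-degeneracy of $H$) enters.
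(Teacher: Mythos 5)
Your proof is correct and follows essentially the same route as the paper: both arguments reduce the claim to the fact that a finite-index sublattice of index $m$ has discriminant $m^2$ times larger, combined with the decomposition $\disc(N\oplus T)=\disc(N)\cdot\disc(T)$ coming from $A_{N\oplus T}\simeq A_N\oplus A_T$. The only (cosmetic) difference is that the paper justifies the index--discriminant relation by multiplying indices along the chain $T\oplus N\hookrightarrow H\hookrightarrow H^*\hookrightarrow (T\oplus N)^*$, whereas you argue via Gram matrix determinants.
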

\begin{proof}
    Consider the sequence of embeddings \[T\oplus N \overset{(1)}{\hookrightarrow} H \overset{(2)}{\hookrightarrow} H^* \overset{(3)}{\hookrightarrow} (T\oplus N)^*.\] The embeddings $(1)$ and $(3)$ each have index $|G|$, and embedding $(2)$ has index $\disc(H)$, thus the index of the composition of the three embeddings is $|G|^2\cdot \disc(H)$. From the isomorphism \[\frac{(T\oplus N)^*}{T\oplus N} = A_{T\oplus N} \simeq A_{T}\oplus A_{N},\] it follows that this must equal $\disc(T)\cdot \disc(N)$.
\end{proof}

A \textit{Hodge lattice} is a lattice with a Hodge structure. Isomorphisms of Hodge lattices that respect the lattice structures are called \textit{Hodge isometries}. If $S$ is a K3 surface, then $H^2(S,\Z)$ and $T(S)$ are both Hodge lattices of K3-type, where the bilinear form is given by the cup-product, see for example \cite[\S3.2]{Huy16}. As a lattice, $H^2(S,\Z)$ is isometric to the \textit{K3-lattice} $$\klat\coloneqq E_8(-1)^{\oplus 2} \oplus U^{\oplus 3},$$ c.f. \cite[Proposition 1.3.5]{Huy16}. Here, $E_8$ is the unique positive-definite even unimodular lattice of rank $8$, and $U$ is the \textit{hyperbolic plane}, which is the unique indefinite, unimodular lattice of rank $2$, see for example \cite[Chapter V]{Ser93}.
This means that $H^2(S,\Z)$ is a unimodular lattice. As a result of Lemma \ref{lem: discriminants in unimodular lattices}, there are group isomorphisms 
\[
A_{T(S)} \simeq G(S) \simeq A_{\NS(S)}(-1).
\]
The full cohomology group $H^*(S,\Z)$ is a Hodge lattice of K3-type, where $H^0(S,\Z)\oplus H^4(S,\Z)$ has the structure of a hyperbolic plane which is orthogonal to $H^2(S,\Z)$, and the Hodge structure is induced by the Hodge structure on $H^2(S,\Z)$. With this Hodge lattice structure, we denote the full cohomology group by $\tilde{H}(S,\Z)$ and call it the Mukai lattice of $S$ \cite[\S1]{Muk87}. The algebraic part of the Mukai lattice is called the extended N\'eron--Severi lattice, which we denote by $\N(S) \coloneqq H^0(S,\Z)\oplus \NS(S) \oplus H^4(S,\Z)\subset \tilde{H}(S,\Z)$. As a lattice, $\tilde{H}(S,\Z)$ is isometric to the lattice $$\extklat \coloneqq E_8(-1)^{\oplus 2}\oplus U^{\oplus 4}$$ which we call the extended K3-lattice. 

If $X$ is a hyperk\"ahler manifold of \Kn-type, where $n\geq 2$, then $H^2(X,\Z)$ is also a Hodge lattice of K3-type, where the lattice structure is given by the Beauville--Bogomolov--Fujiki form \cite{Bea83,Bog96,Fuj87}. The lattice structure of $H^2(X,\Z)$ is isometric to the lattice 
\begin{equation} \label{eq: k3n lattice}
E_8(-1)^{\oplus 2}\oplus U^{\oplus 3}\oplus \langle2-2n\rangle,
\end{equation}
where $\langle2-2n\rangle$ denotes the rank 1 lattice whose generator has square $2-2n$  \cite[\S 6]{Bea83}. Thus, $H^2(X,\Z)$ is not unimodular, since $\disc(H^2(X,\Z)) = 2n-2$. The gluing group $G(X)$ may be a non-trivial subgroup of $A_{T(X)}$ and of $A_{\NS(X)}$ in this case, by Lemma \ref{lem: discriminants in unimodular lattices} and Lemma \ref{lem: gluing subgroup order}.

\subsection{The coarseness of a hyperk\"ahler manifold}
We recall the Birational Torelli Theorem for hyperk\"ahler manifolds of \Kn-type. 
\begin{theorem}\cite[Corollary 9.9]{Mar11} \label{thm: birational torelli theorem}
    Let $X$ be a hyperk\"ahler manifold of \Kn-type. There is a natural $O(\extklat)$-orbit of embeddings $i_X\colon H^2(X,\Z)\hookrightarrow \extklat$. Moreover, two hyperk\"ahler manifolds $X$ and $Y$ of \Kn-type are birational if and only if there is a commutative diagram of metric morphisms
    \begin{equation}
        \xymatrix{
            H^2(X,\Z) \ar[r]^{i_X} \ar[d]^\simeq & \extklat \ar[d]^\simeq \\
            H^2(Y,\Z) \ar[r]^{i_Y} & \extklat
        }
    \end{equation}
    where the left-hand-side vertical isometry is a Hodge isometry. 
\end{theorem}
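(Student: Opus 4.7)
The plan is to first construct the natural $O(\extklat)$-orbit of embeddings $i_X$, and then combine this with Verbitsky's Hodge-theoretic global Torelli theorem to obtain the birational biconditional. For the construction, I would begin with the model case in which $X = M_v(S)$ is a smooth projective moduli space of Gieseker-stable sheaves on a K3 surface $S$ with a Mukai vector $v \in \N(S)$ of square $v^2 = 2n-2$. By O'Grady's theorem, there is a canonical Hodge isometry $H^2(X,\Z) \simeq v^\perp \subset \tilde{H}(S,\Z)$; fixing an abstract isometry $\tilde{H}(S,\Z) \simeq \extklat$ produces the desired embedding. Since any hyperk\"ahler manifold $X$ of \Kn-type is deformation equivalent to such a moduli space, parallel transport along a smooth proper family yields an embedding $i_X \colon H^2(X,\Z) \hookrightarrow \extklat$, and the ambiguity in the choice of deformation path is absorbed by the $O(\extklat)$-action, producing the natural orbit.

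For the forward implication, suppose $X$ and $Y$ are birational. A birational map $\phi \colon X \dashrightarrow Y$ between hyperk\"ahler manifolds is an isomorphism outside a codimension-two subset, so it induces a Hodge isometry $\phi^{*} \colon H^2(Y,\Z) \to H^2(X,\Z)$ with respect to the Beauville--Bogomolov--Fujiki form. By Huybrechts' result that birational hyperk\"ahler manifolds deform to isomorphic ones through a smooth family, $\phi^{*}$ is realised as a parallel transport operator; the naturality of the construction from the first step then automatically upgrades $\phi^{*}$ to an isometry of $\extklat$ making the diagram commute.

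The backward implication is the deeper direction. Given a commutative diagram as in the statement, one must show that the Hodge isometry $\psi \colon H^2(X,\Z) \to H^2(Y,\Z)$ is induced by a birational map. The crucial input is Markman's characterisation of the monodromy group $\mathrm{Mon}^2(X) \subset O(H^2(X,\Z))$ as the image of the stabiliser in $O(\extklat)$ of the generator of the orthogonal complement of $H^2(X,\Z)$: the hypothesis that $\psi$ extends to $\extklat$ is then precisely the condition that $\psi$ is a parallel transport operator. Verbitsky's Hodge-theoretic global Torelli theorem then supplies a birational map $X \dashrightarrow Y$ inducing $\psi$. The main obstacle is the monodromy calculation of Markman, which requires a careful analysis of how isometries of $\extklat$ act through deformation families of moduli spaces of sheaves on K3 surfaces; once this identification of $\mathrm{Mon}^2(X)$ is granted, both implications reduce to formal applications of Verbitsky's Torelli theorem and of the naturality of the Markman--Mukai embedding.
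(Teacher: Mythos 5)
This statement is not proved in the paper at all: it is quoted verbatim as \cite[Corollary 9.9]{Mar11}, so there is no internal argument to compare against. Your outline is, however, an accurate summary of how Markman actually proves it: the orbit of embeddings is constructed on moduli spaces $M_v(S)$ via O'Grady's Hodge isometry $H^2(M_v(S),\Z)\simeq v^\perp\subset\tilde H(S,\Z)$ and propagated by parallel transport, the forward implication uses Huybrechts' deformation-equivalence of birational hyperk\"ahler manifolds, and the backward implication rests on Markman's computation of $\mathrm{Mon}^2(X)$ together with the Hodge-theoretic global Torelli theorem of Verbitsky. The one point you elide is the orientation issue: $\mathrm{Mon}^2(X)$ consists of \emph{orientation-preserving} isometries acting by $\pm1$ on the discriminant group, whereas ``extends to an isometry of $\extklat$'' only captures the $\pm1$ condition, so a Hodge isometry $\psi$ fitting into the diagram need not itself be a parallel transport operator. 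This is repaired by replacing $\psi$ with $-\psi$ (which is still a Hodge isometry, still extends to $\extklat$ via $-\tilde g$, and reverses the orientation class), which is how the statement as quoted becomes an honest if-and-only-if. With that caveat your sketch is a faithful account of the external proof being cited.
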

For a hyperk\"ahler manifold $X$, pick any embedding $i_X\colon H^2(X,\Z)\hookrightarrow \extklat$ in the natural $O(\extklat)$-orbit. Then $\extklat$ inherits a Hodge lattice structure of K3-type from $H^2(X,\Z)$, which we denote by \mmlatx. Explicitly, the Hodge structure on $\mmlatx$ is the one defined by $\mmlatx^{2,0} = i_X(H^{2,0}(X))$.  
Note that $\mmlatx$ is only well-defined up to Hodge isometries. This Hodge lattice is usually called the \textit{Markman--Mukai lattice}, see for example \cite{Add16}.

The orthogonal complement of $H^2(X,\Z)$ in $\mmlatx$ is a lattice of rank 1 generated by a vector $v\in \mmlatx^{1,1}$ of square $2n-2$. Indeed, we have $\rk (H^2(X,\Z)^\perp) = \rk (\mmlatx) - \rk (H^2(X,\Z)) = 1$. Moreover, the discriminant group of $H^2(X,\Z)$ is cyclic of order $2n-2$ by \eqref{eq: k3n lattice}. Since $\mmlatx$ is unimodular, the discriminant group of $H^2(X,\Z)^\perp$ is also cyclic of order $2n-2$, thus we have $v^2 = 2n-2$. 
Since $v$ depends on the choice of an embedding $H^2(X,\Z)\hookrightarrow \extklat$, it is well-defined up to Hodge isometries of $\mmlatx$.
\begin{definition}\label{def: coarseness}
    Let $X$ be a hyperk\"ahler manifold of \Kn-type, and let $v\in \mmlatx$ be a generator for the orthogonal complement of $H^2(X,\Z)$. The \textit{coarseness} of $X$ is the positive integer 
    \[
    \crs(X)\coloneqq \dv(v) = \gcd_{w\in \mmlatx^{1,1}}(v\cdot w).
    \]
\end{definition}
We now list some basic facts about coarseness.
\begin{lemma}\label{lem: basic results about coarseness}
    Let $X$ be a hyperk\"ahler manifold of \Kn-type, where $n\geq 2$.
    \begin{itemize}
        \item[(i)] We have $\crs(X)\mid \disc(T(X))$ and $\crs(X)\mid 2n-2$.
        \item[(ii)] If $Y$ is a hyperk\"ahler manifold which is birational to $X$, then $\crs(X) = \crs(Y)$.
        \item[(iii)] If $X = M(v)$ is a smooth moduli space of stable sheaves on some K3 surface $S$, where $v\in \N(S)$ is a primitive, effective Mukai vector, then we have $\crs(X) = \dv(v)$, where $\dv(v)$ is the divisibility of $v$ in $\N(S)$.
        \item[(iv)] If $X = M(v)$ is a smooth moduli space of stable sheaves on some K3 surface $S$, then $\crs(X)$ is equal to the order of the Brauer class $\alpha_X\in \Br(X)$ that obstructs the existence of a universal sheaf on $S\times X$.
    \end{itemize}
\end{lemma}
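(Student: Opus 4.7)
For part (i), I plan two observations. Since $v \cdot v = 2n-2$ and $v \in \mmlatx^{1,1}$, taking $w = v$ in the gcd defining $\crs(X)$ shows immediately that $\crs(X) \mid 2n-2$. For the second divisibility, the vector $v$ is primitive in the integral $(1,1)$-sublattice $\mmlatx^{1,1}$ (as it is already primitive in $\mmlatx$), so equation \eqref{eq: divisibility divides discriminant} applied within $\mmlatx^{1,1}$ yields $\crs(X) \mid \disc(\mmlatx^{1,1})$. Now $\mmlatx^{1,1}$ coincides with the orthogonal complement of $T(X)$ inside $\mmlatx$ (orthogonality to the transcendental simple summand over $\Q$ picks out the classes of type $(1,1)$), and Lemma \ref{lem: discriminants in unimodular lattices}, combined with the unimodularity of $\mmlatx$, gives $\disc(\mmlatx^{1,1}) = \disc(T(X))$.

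For part (ii), I would invoke the Birational Torelli Theorem (Theorem \ref{thm: birational torelli theorem}): a birational equivalence between $X$ and $Y$ produces a commutative diagram in which the induced isometry $\extklat \to \extklat$ is Hodge-compatible with respect to the induced Hodge structures on $\mmlatx$ and $\tilde{\Lambda}_Y$. This isometry sends the orthogonal complement of $H^2(X,\Z)$ to that of $H^2(Y,\Z)$, hence $v_X \mapsto \pm v_Y$. Since isometries preserve divisibility and $\dv(\pm v) = \dv(v)$, we conclude $\crs(X) = \crs(Y)$.

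For part (iii), the plan relies on Markman's identification $\mmlatx \simeq \tilde{H}(S,\Z)$ when $X = M(v)$, under which the primitive sublattice $H^2(M(v),\Z) \hookrightarrow \mmlatx$ corresponds to the Mukai isomorphism $v^\perp \hookrightarrow \tilde{H}(S,\Z)$, see \cite{Mar11}. Consequently the orthogonal complement of $H^2(M(v),\Z)$ in $\mmlatx$ is generated by the Mukai vector $v$ itself. Since the integral $(1,1)$-part of $\tilde{H}(S,\Z)$ is precisely $\N(S)$, the definition of $\crs$ reduces to $\dv(v)$ computed in $\N(S)$, as desired. Part (iv) then follows from (iii) together with the classical result of C\u{a}ld\u{a}raru \cite{Cal00}, reviewed in \cite{MM24}, identifying the order of $\alpha_{M(v)} \in \Br(M(v))$ with the divisibility of $v$ in $\N(S)$.

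I anticipate the main obstacle will be (iii): making sure that under Markman's identification the generator of $H^2(M(v),\Z)^\perp \subset \mmlatx$ is genuinely the Mukai vector $v$ and not some other primitive vector of the same square in the $(1,1)$-part. This requires carefully unpacking the construction of the natural monodromy orbit of embeddings $H^2(M(v),\Z) \hookrightarrow \extklat$ from \cite{Mar11}. Once that matching is pinned down, the remaining content of each of the four statements reduces to elementary lattice-theoretic bookkeeping.
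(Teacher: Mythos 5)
Your proposal is correct and follows essentially the same route as the paper's proof in all four parts: (i) via $\dv(v)\mid v^2$ and $\crs(X)\mid\disc(\mmlatx^{1,1})=\disc(T(X))$ using unimodularity of $\mmlatx$, (ii) via the Birational Torelli Theorem sending $v_X\mapsto\pm v_Y$, (iii) via Markman's identification of the natural orbit of embeddings with $v^\perp\subset\tilde{H}(S,\Z)$ so that the orthogonal complement is generated by $v$ and the $(1,1)$-part is $\N(S)$, and (iv) by combining (iii) with the C\u{a}ld\u{a}raru--Markman--Mukai description of the obstruction class. The only cosmetic difference is that the paper cites Nikulin directly for $\disc(\mmlatx^{1,1})=\disc(T(X))$ where you invoke Lemma \ref{lem: discriminants in unimodular lattices}; the content is identical.
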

\begin{proof}
    \begin{itemize}
        \item[(i)] Since $\crs(X)$ is the divisibility of a primitive vector in $\mmlatx^{1,1}$, it follows from \eqref{eq: divisibility divides discriminant} that $\crs(X)\mid \disc(\mmlatx^{1,1})$. Since $\mmlatx^{1,1} = T(X)^\perp\subset \mmlatx$ and $\mmlatx$ is unimodular, we have $\disc(\mmlatx^{1,1}) = \disc(T(X))$ by \cite[Corollary 1.6.2]{Nik80}. Moreover, $\crs(X)\mid 2n-2$ follows from the fact that $\dv(v)$ divides $v^2=2n-2$.
        \item[(ii)] There exists a Hodge isometry $\Phi\colon\mmlatx\simeq \tilde{\Lambda}_Y$ which maps $H^2(X,\Z)$ to $H^2(Y,\Z)$ by the Birational Torelli Theorem \ref{thm: birational torelli theorem}. Let $v\in \mmlatx$ and $v'\in \tilde{\Lambda}_Y$ be generators for the orthogonal complements of $H^2(X,\Z)$ and $H^2(Y,\Z)$. Then $\Phi(v) = \pm v'$, hence $$\crs(X) = \dv(v) = \dv(\Phi(v)) = \dv(\pm v') = \dv(v') = \crs(Y).$$
        \item[(iii)] If $X = M(v)$, then there is a Hodge isometry $H^2(X,\Z)\simeq v^\perp\subset \tilde{H}(S,\Z)$ \cite[Main Theorem]{OGr97}, and the natural $O(\extklat)$-orbit of embeddings is given by the embedding $v^\perp\subset \tilde{H}(S,\Z)\simeq \extklat$ \cite[Example 9.6]{Mar11}. Therefore we have $\crs(X) = \dv(v)$.
        \item[(iv)] This follows from (iii) combined with \cite[Theorem 4.5]{MM24}.
    \end{itemize}
\end{proof}

By Lemma \ref{lem: basic results about coarseness}(iv), a smooth moduli space $M(v)$ of stable sheaves on a K3 surface is fine if and only if $\crs(M(v)) = 1$, which explains why we call $\crs(M(v))$ the coarseness of $M(v)$. 

\section{Counterexamples to the Kuznetsov--Shinder conjecture}

The main goal of this section is to prove the following theorem, which provides counterexamples to Conjecture \ref{conj: D implies L}. 

\begin{theorem}\label{thm: D equivalence does not imply L equivalence Main Text}
    Let $(S,H)$ be a primitively polarised K3 surface with $H^2 = 2g-2 \geq 2$, and assume that $S$ has Picard rank 1. Then the moduli spaces of stable sheaves $\olPic^0\coloneqq M(0,H,1-g)$ and $\olPic^{g-1} = M(0,H,0)$ are $D$-equivalent but not $L$-equivalent. 
\end{theorem}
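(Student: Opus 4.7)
The $D$-equivalence is already established in \cite{ADM16}, so my plan is to refute $L$-equivalence by applying Theorem \ref{thm: L equivalent same divisibility Intro}: I will show that the $L$-invariant $|G(X)| = \disc(T(X))/\crs(X)$ takes different values on $\olPic^0$ and $\olPic^{g-1}$.

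\textbf{Step 1 (Coarseness computation).} Both Mukai vectors $v_1 \coloneqq (0,H,1-g)$ and $v_2 \coloneqq (0,H,0)$ are primitive in $\N(S) = H^0(S,\Z)\oplus \Z H\oplus H^4(S,\Z)$, using the Picard rank 1 hypothesis which forces $\NS(S) = \Z H$. Pairing against an arbitrary element $(r',mH,s') \in \N(S)$ with the Mukai pairing $(r,D,s)\cdot(r',D',s') = D\cdot D' - rs' - r's$ gives
\[
v_1 \cdot (r',mH,s') = (g-1)(2m+r'), \qquad v_2 \cdot (r',mH,s') = 2m(g-1),
\]
so $\dv(v_1) = g-1$ and $\dv(v_2) = 2(g-1)$ in $\N(S)$. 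By Lemma \ref{lem: basic results about coarseness}(iii), this yields $\crs(\olPic^0) = g-1$ and $\crs(\olPic^{g-1}) = 2(g-1)$.

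\textbf{Step 2 (Transcendental discriminant).} For $X = M(v)$, O'Grady's theorem gives a Hodge isometry $H^2(X,\Z) \simeq v^\perp \subset \tilde{H}(S,\Z)$. Since $H^0(S,\Z)\oplus H^4(S,\Z) \subset \N(S)$ is algebraic, the transcendental Hodge substructure of $\tilde{H}(S,\Z)$ coincides with $T(S)$, and hence $T(\olPic^0) \simeq T(\olPic^{g-1}) \simeq T(S)$ as Hodge lattices. The Picard rank 1 assumption gives $\disc(\NS(S)) = |H^2| = 2g-2$, and since $H^2(S,\Z)$ is unimodular, Lemma \ref{lem: discriminants in unimodular lattices} implies $\disc(T(S)) = 2g-2$.

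\textbf{Step 3 (Conclusion).} Combining the previous steps with Theorem \ref{thm: L equivalent same divisibility Intro},
\[
|G(\olPic^0)| = \frac{2(g-1)}{g-1} = 2, \qquad |G(\olPic^{g-1})| = \frac{2(g-1)}{2(g-1)} = 1,
\]
which are distinct as $g\geq 2$. By Lemma \ref{lem: gluing subgroup order is L-invariant}, $\olPic^0$ and $\olPic^{g-1}$ cannot be $L$-equivalent.

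The main obstacle is essentially conceptual rather than computational: all the technical weight is carried by the machinery of Section 2, and once the framework is in place the proof reduces to verifying a factor-of-2 asymmetry between the divisibilities of $v_1$ and $v_2$ in the extended Néron--Severi lattice. The subtlety one must not overlook is the identification $T(M(v)) \simeq T(S)$, which guarantees that $\disc(T(X))$ does not distinguish the two spaces, so that the entire discrepancy in $|G(X)|$ is accounted for by the difference in coarseness.
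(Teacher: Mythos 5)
Your proposal is correct and follows essentially the same route as the paper: cite \cite{ADM16} for the $D$-equivalence, compute $\crs(\olPic^0)=g-1$ and $\crs(\olPic^{g-1})=2g-2$ via the divisibilities of the Mukai vectors in $\N(S)$ using the Picard rank 1 hypothesis, and conclude via the $L$-invariance of $\disc(T(X))/\crs(X)$. The only cosmetic difference is that you evaluate the invariant explicitly as $|G(\olPic^0)|=2$ versus $|G(\olPic^{g-1})|=1$ (using $\disc(T(S))=2g-2$), whereas the paper simply notes that the two moduli spaces share the same $\disc(T(S))$ so the differing coarsenesses already suffice.
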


The fact that $\olPic^0$ and $\olPic^{g-1}$ are derived equivalent was proved in \cite{ADM16}. Our contribution is proving that they are not $L$-equivalent. Note that $\olPic^{0}$ and $\olPic^{g-1}$ are never birational for $g>1$ by Lemma \ref{lem: basic results about coarseness}(ii). Indeed, we will see in the proof of Theorem \ref{thm: D equivalence does not imply L equivalence Main Text} that we have $\crs(\olPic^{0}) = g-1$ and $\crs(\olPic^{g-1}) = 2g-2$.

The idea of the proof of Theorem \ref{thm: D equivalence does not imply L equivalence Main Text} is to show that the orders of the gluing groups of $\olPic^0$ and $\olPic^{g-1}$ are not equal, and then apply Lemma \ref{lem: gluing subgroup order is L-invariant}. 
If $X = M(v)$ is a smooth $2n$-dimensional moduli space of stable sheaves on a K3 surface $S$, then the Hodge isometry $H^2(X,\Z) \simeq v^\perp$ of \cite[Main Theorem]{OGr97} restricts to a Hodge isometry of transcendental lattices $T(X) \simeq T(S)$. thus we have $\disc(T(X)) = \disc(T(S))$. Moreover, it follows from \eqref{eq: k3n lattice} that $\disc(H^2(X,\Z)) = 2n-2$. Therefore, to compute $|G(X)|$ using Lemma \ref{lem: gluing subgroup order}, we only need to compute $\disc(\NS(X))$.

\begin{lemma}\label{lem: discriminants of neron severi lattices}
    Let $X$ be a hyperk\"ahler manifold of \Kn-type, where $n\geq 2$. Then we have 
    \begin{equation} \label{eq: determinants of neron severi lattices}
        \disc(\NS(X)) = \frac{\disc(T(X))\cdot (2n-2)}{\crs(X)^2}.
    \end{equation}
    Here, $\crs(X)$ denotes the coarseness of $X$, see Definition \ref{def: coarseness}.
\end{lemma}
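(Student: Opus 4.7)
The plan is to express $\disc(\NS(X))$ in terms of $\disc(T(X))$ and the index of the natural sublattice $\NS(X)\oplus\langle v\rangle$ inside $\mmlatx^{1,1}$, where $v\in\mmlatx$ is a generator of the orthogonal complement of $H^2(X,\Z)$. The two formulas from the previous subsection, namely Lemma \ref{lem: discriminants in unimodular lattices} and Lemma \ref{lem: gluing subgroup order}, do essentially all of the work, once one identifies the relevant orthogonal decomposition inside $\mmlatx$.

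First I would record the two orthogonality relations that live inside the unimodular lattice $\mmlatx$:
\begin{itemize}
    \item[(a)] $T(X)\subset\mmlatx$ is primitive with orthogonal complement $\mmlatx^{1,1}$. Since $\mmlatx$ is unimodular, Lemma \ref{lem: discriminants in unimodular lattices} gives $\disc(\mmlatx^{1,1})=\disc(T(X))$ (this is essentially \cite[Corollary 1.6.2]{Nik80}, already used in the proof of Lemma \ref{lem: basic results about coarseness}(i)).
    \item[(b)] $\langle v\rangle\subset\mmlatx^{1,1}$ is primitive, and its orthogonal complement inside $\mmlatx^{1,1}$ equals $\NS(X)$. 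Indeed, $H^2(X,\Z)=\langle v\rangle^\perp\subset\mmlatx$ because the orthogonal complement of a primitive rank-$1$ sublattice in a unimodular lattice is again primitive of the correct rank, so intersecting with $\mmlatx^{1,1}$ yields $\NS(X)=H^2(X,\Z)\cap\mmlatx^{1,1}$.
\end{itemize}

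Next I would apply Lemma \ref{lem: gluing subgroup order} to the primitive embedding in (b): writing $G''\coloneqq\mmlatx^{1,1}/(\NS(X)\oplus\langle v\rangle)$ for the corresponding gluing group and using $\disc(\langle v\rangle)=v^2=2n-2$, it gives
\[
\disc(\NS(X))\cdot(2n-2)=|G''|^2\cdot\disc(T(X)).
\]
So the lemma reduces to showing $|G''|=(2n-2)/\crs(X)$.

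For this final step I would compute the image of the injection $i_{\langle v\rangle}\colon G''\hookrightarrow A_{\langle v\rangle}$ provided by Lemma \ref{lem: discriminants in unimodular lattices}. Under the standard identification $A_{\langle v\rangle}\simeq\frac{1}{2n-2}\Z/\Z$, an element $x\in\mmlatx^{1,1}$ maps to $(x\cdot v)/(2n-2)$ modulo $\Z$, so the image $i_{\langle v\rangle}(G'')$ is the cyclic subgroup generated by $d/(2n-2)$, where
\[
d=\gcd_{x\in\mmlatx^{1,1}}(x\cdot v)=\crs(X)
\]
by Definition \ref{def: coarseness}. Since $\crs(X)\mid 2n-2$ by Lemma \ref{lem: basic results about coarseness}(i), this cyclic subgroup has order $(2n-2)/\crs(X)$, and plugging $|G''|=(2n-2)/\crs(X)$ into the displayed identity yields the desired formula. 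The only mildly delicate point is verifying (b) and correctly identifying $i_{\langle v\rangle}(G'')$ as the cyclic group of order $(2n-2)/\crs(X)$; everything else is bookkeeping.
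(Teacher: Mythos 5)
Your proposal is correct and follows essentially the same route as the paper: the same decomposition $\NS(X)\oplus\Z v\subset\mmlatx^{1,1}$, the same application of Lemma \ref{lem: gluing subgroup order} with $\disc(\Z v)=2n-2$ and $\disc(\mmlatx^{1,1})=\disc(T(X))$, and the same identification of the index as $(2n-2)/\crs(X)$. The only cosmetic difference is that you compute that index via the image of the injection $G''\hookrightarrow A_{\langle v\rangle}$, whereas the paper packages the same computation into a commutative diagram following \cite[Lemma 9.10]{Bec22}.
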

Note that both $\disc(T(X))$ and $2n-2$ are divisible by $\crs(X)$ by Lemma \ref{lem: basic results about coarseness}(i), therefore the right-hand-side of \eqref{eq: determinants of neron severi lattices} is an integer.
\begin{proof}
    Denote by $v\in \mmlatx^{1,1}$ a generator for the orthogonal complement of $H^2(X,\Z)$. We have $\NS(X)\simeq v^\perp \cap \mmlatx^{1,1}$. This induces a chain of finite-index embeddings $$\NS(X)\oplus \Z v \hookrightarrow \mmlatx^{1,1}\hookrightarrow (\mmlatx^{1,1})^*\hookrightarrow (\NS(X)\oplus \Z v)^*.$$ It follows from Lemma \ref{lem: gluing subgroup order} that 
    \begin{equation}\label{eq: discriminants without computation of m}
    \disc(\NS(X)) \cdot (2n-2) = m^2 \cdot \disc(\mmlatx^{1,1}),
    \end{equation}
    where $m$ is the index $[\mmlatx^{1,1}:\NS(X)\oplus \Z v]$. Here we used that $\disc(\Z v) = 2n-2$.
    To compute $m$, we mimic the proof of \cite[Lemma 9.10]{Bec22}. We have a commutative diagram:

    \begin{equation*}
        \xymatrix@=1em{
            &0\ar[d] &0 \ar[d] &&\\
            0 \ar[r] & \NS(X) \ar[r] \ar[d]& \NS(X) \ar[d]\ar[r]& 0 \ar[d] & \\ 
            0\ar[r] & \NS(X)\oplus \Z v \ar[r] \ar[d]& \mmlatx^{1,1} \ar[r] \ar[d] & G\ar[r] \ar[d]& 0 \\
            0 \ar[r]& \Z v \ar[r] \ar[d] & (\Z v)^* \ar[r] \ar[d] & A_v \ar[d] \ar[r] & 0\\
            & 0 \ar[r] & \Z/\dv(v)\Z \ar[r] \ar[d] & \Z/\dv(v)\Z \ar[r] \ar[d]& 0 \\
            &&0&0&
        }
    \end{equation*}
    Here, $A_v$ denotes the discriminant group of $\Z v$, which is cyclic of order $v^2 = 2n-2$. It follows that $$m=[\mmlatx^{1,1}:\NS(X)\oplus \Z v] = |G| = \frac{|A_v|}{\dv(v)} = \frac{2n-2}{\dv(v)}.$$ Combining this with \eqref{eq: discriminants without computation of m} gives the desired result. Here, we use the fact that $\disc(\mmlatx^{1,1}) = \disc(T(X))$, which holds because $\mmlatx^{1,1}$ is the orthogonal complement of $T(X)$ inside the unimodular lattice $\mmlatx$. 
\end{proof}

\begin{remark} \label{rem: discriminant of moduli spaces}
    If $X = M(v)$ is a $2n$-dimensional moduli space of stable sheaves on a K3 surface $S$, where $n\geq 2$, then \eqref{eq: determinants of neron severi lattices} becomes the following equation: 
    \begin{equation*}
        \disc(M(v)) = \frac{\disc(T(S))\cdot(2n-2)}{\dv(v)^2}.
    \end{equation*}
\end{remark}

\begin{theorem}\label{thm: L equivalent same divisibility}
    Let $X$ be a hyperk\"ahler manifold of \Kn-type, where $n\geq 2$. Then $$|G(X)| = \frac{\disc(T(X))}{\crs(X)}.$$ In particular, if $X$ is $L$-equivalent to a hyperk\"ahler manifold $Y$ of \Kn-type, then we have $$ \frac{\disc(T(X))}{\crs(X)} =  \frac{\disc(T(Y))}{\crs(Y)}.$$
\end{theorem}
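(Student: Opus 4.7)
The plan is to combine the two general identities already established, Lemma \ref{lem: gluing subgroup order} and Lemma \ref{lem: discriminants of neron severi lattices}, and then invoke Lemma \ref{lem: gluing subgroup order is L-invariant} for the $L$-invariance statement.

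First, I would apply Lemma \ref{lem: gluing subgroup order} to the primitive sublattice $\NS(X)\subset H^2(X,\Z)$ with orthogonal complement $T(X)$, which yields
$$\disc(T(X))\cdot \disc(\NS(X)) = |G(X)|^2 \cdot \disc(H^2(X,\Z)).$$
Using the explicit shape of the K3$^{[n]}$-lattice in \eqref{eq: k3n lattice}, we have $\disc(H^2(X,\Z)) = 2n-2$, so this becomes $\disc(T(X))\cdot \disc(\NS(X)) = |G(X)|^2 \cdot (2n-2)$.

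Next, I would substitute the formula for $\disc(\NS(X))$ from Lemma \ref{lem: discriminants of neron severi lattices}, namely
$$\disc(\NS(X)) = \frac{\disc(T(X))\cdot (2n-2)}{\crs(X)^2}.$$
Plugging this in gives
$$\disc(T(X))\cdot \frac{\disc(T(X))\cdot (2n-2)}{\crs(X)^2} = |G(X)|^2 \cdot (2n-2),$$
and after cancelling the common factor $2n-2>0$ (this is where we use $n\geq 2$), we obtain
$$|G(X)|^2 = \frac{\disc(T(X))^2}{\crs(X)^2}.$$
Taking positive square roots yields the desired identity $|G(X)| = \disc(T(X))/\crs(X)$. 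The integrality of this quantity is already guaranteed by Lemma \ref{lem: basic results about coarseness}(i).

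For the second assertion, if $X\sim_L Y$, then Lemma \ref{lem: gluing subgroup order is L-invariant} gives $|G(X)| = |G(Y)|$, and the first part translates this into the equality $\disc(T(X))/\crs(X) = \disc(T(Y))/\crs(Y)$. I do not anticipate any real obstacle here: the argument is essentially a bookkeeping step once Lemmas \ref{lem: gluing subgroup order}, \ref{lem: discriminants of neron severi lattices} and \ref{lem: gluing subgroup order is L-invariant} are in place, and the only subtlety is making sure the factor $2n-2$ from the K3$^{[n]}$-lattice discriminant is handled correctly on both sides of the equation.
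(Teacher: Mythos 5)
Your proposal is correct and follows exactly the paper's own argument: apply Lemma \ref{lem: gluing subgroup order} to $\NS(X)\subset H^2(X,\Z)$, substitute the expression for $\disc(\NS(X))$ from Lemma \ref{lem: discriminants of neron severi lattices}, cancel $2n-2$, take positive square roots, and conclude the $L$-invariance from Lemma \ref{lem: gluing subgroup order is L-invariant}. There is nothing to add.
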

\begin{proof}
    Lemma \ref{lem: gluing subgroup order} for $H = H^2(X,\Z)$ yields:
    \[
    \disc(T(X)) \cdot \disc(\NS(X)) = |G(X)|^2 \cdot (2n-2). 
    \]
    Combining this with Lemma \ref{lem: discriminants of neron severi lattices}, we obtain 
    \[
    |G(X)|^2 = \frac{\disc(T(X))^2}{\crs(X)^2}, 
    \] which shows that $|G(X)| = \frac{\disc(T(X))}{\crs(X)}.$

    The final claim follows from Lemma \ref{lem: gluing subgroup order is L-invariant}.
\end{proof}

\begin{corollary}\label{cor: L equivalent moduli spaces have same divisibility}
    Let $M = M(v)$ and $M'=M(u)$ be two smooth moduli spaces of stable sheaves, on the same K3 surface $S$, of dimension $2n\geq 4$. 
    Then we have 
    \begin{equation}\label{eq: gluing order for moduli spaces}
        |G(M(v))| = \frac{\disc(T(S))}{\dv(v)}.
    \end{equation}
    Thus, if $M$ and $M'$ are $L$-equivalent, then we have $\dv(v) = \dv(u)$. 
\end{corollary}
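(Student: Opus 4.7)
The plan is to combine the main theorem just proved (Theorem \ref{thm: L equivalent same divisibility}) with the identification $T(M(v)) \simeq T(S)$ for moduli spaces of sheaves, and with Lemma \ref{lem: basic results about coarseness}(iii), which identifies the coarseness of a moduli space $M(v)$ with the divisibility $\dv(v)$ of the Mukai vector in the extended N\'eron--Severi lattice $\N(S)$.

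First I would recall that for a smooth moduli space $M(v)$ of stable sheaves on $S$ of dimension $2n \geq 4$, the O'Grady Hodge isometry $H^2(M(v),\Z) \simeq v^\perp \subset \tilde{H}(S,\Z)$ (cited earlier in the excerpt) restricts to a Hodge isometry of transcendental lattices $T(M(v)) \simeq T(S)$. In particular, $\disc(T(M(v))) = \disc(T(S))$. Next, by Lemma \ref{lem: basic results about coarseness}(iii), we have $\crs(M(v)) = \dv(v)$. Plugging both equalities into the formula of Theorem \ref{thm: L equivalent same divisibility} yields
\[
|G(M(v))| \;=\; \frac{\disc(T(M(v)))}{\crs(M(v))} \;=\; \frac{\disc(T(S))}{\dv(v)},
\]
which is exactly \eqref{eq: gluing order for moduli spaces}.

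For the second assertion, suppose $M = M(v)$ and $M' = M(u)$ are $L$-equivalent. By Lemma \ref{lem: gluing subgroup order is L-invariant}, $|G(M)| = |G(M')|$. Applying \eqref{eq: gluing order for moduli spaces} to both sides gives
\[
\frac{\disc(T(S))}{\dv(v)} \;=\; \frac{\disc(T(S))}{\dv(u)},
\]
where the numerator is the same on both sides precisely because $M$ and $M'$ are moduli spaces on the \emph{same} K3 surface $S$. Since $\disc(T(S))$ is a positive integer, we conclude $\dv(v) = \dv(u)$.

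There is essentially no obstacle here: the statement is a direct specialization of Theorem \ref{thm: L equivalent same divisibility} to the case of moduli spaces of sheaves on K3 surfaces, once we invoke the two standard facts $T(M(v)) \simeq T(S)$ and $\crs(M(v)) = \dv(v)$. The only minor point worth emphasising is that the two moduli spaces must live over the same K3 surface, which is exactly what allows the transcendental discriminants to cancel.
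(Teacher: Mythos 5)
Your proof is correct and follows essentially the same route as the paper: invoke the O'Grady Hodge isometry to get $T(M(v))\simeq T(S)$, identify $\crs(M(v)) = \dv(v)$ via Lemma \ref{lem: basic results about coarseness}, specialize Theorem \ref{thm: L equivalent same divisibility}, and conclude via Lemma \ref{lem: gluing subgroup order is L-invariant}. (If anything, your citation of part (iii) of Lemma \ref{lem: basic results about coarseness} is the more direct one; the paper cites part (iv), but the content used is the same.)
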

\begin{proof}
    Since there is a Hodge isometry $H^2(M,\Z) \simeq v^\perp\subset \tilde{H}(S,\Z)$ \cite{OGr97,Yos01}, we have a Hodge isometry $T(S)\simeq T(M)$. Moreover, it follows from Lemma \ref{lem: basic results about coarseness}(iv) that $\crs(M) = \dv(v)$. Therefore, \eqref{eq: gluing order for moduli spaces} follows from Theorem \ref{thm: L equivalent same divisibility}. The equality $\dv(v) = \dv(u)$ now follows from \eqref{eq: gluing order for moduli spaces} combined with Lemma \ref{lem: gluing subgroup order is L-invariant}.
\end{proof}

Theorem \ref{thm: D equivalence does not imply L equivalence Main Text} is an easy consequence of Corollary \ref{cor: L equivalent moduli spaces have same divisibility} combined with \cite{ADM16}:

\begin{proof}[Proof of Theorem \ref{thm: D equivalence does not imply L equivalence Main Text}]
    The moduli spaces $\olPic^0$ and $\olPic^{g-1}$ are derived equivalent by \cite[Proposition 2.1]{ADM16}. We now compute the coarseness of $\olPic^0$ and $\olPic^{g-1}$. By Lemma \ref{lem: basic results about coarseness}(iii), we have $\crs(\olPic^0) = \dv(0,H,1-g)$, where the divisibility is taken in the lattice $\N(S)$. Moreover, since $S$ has Picard rank 1, we find $\N(S) = H^0(S,\Z) \oplus \Z H \oplus H^4(S,\Z)$. We compute $$\dv(0,H,1-g) = \gcd(H^2,1-g) = \gcd(2g-2, 1-g) = g-1.$$ Similarly, we find $\crs(\olPic^{g-1}) = 2g-2$. Since $2g-2\geq 2$, it follows that $\crs(\olPic^{0}) = g-1 \neq 2g-2 = \crs(\olPic^{g-1})$, thus $\olPic^0$ and $\olPic^{g-1}$ are not $L$-equivalent by Theorem \ref{thm: L equivalent same divisibility}. 
\end{proof}

\begin{remark} \label{rem: it also works for K3 surfaces, sort of}
    If $n=1$, then \eqref{eq: gluing order for moduli spaces} needs to be modified. Indeed, if $M$ is a K3 surface, Then we have $|G(M)| = \disc(T(M))$, and there is a short exact sequence \cite{Muk87}
    \[
    0\to T(S) \to T(M) \to \Z/\dv(v)\Z \to 0, 
    \] from which it follows that $\disc(T(M)) = \frac{\disc(T(S))}{\dv(v)^2}$. The conclusion of Corollary \ref{cor: L equivalent moduli spaces have same divisibility} still holds, i.e. if $M(v)$ and $M(u)$ are 2-dimensional smooth moduli spaces of stable sheaves on $S$ which are $L$-equivalent, then $\dv(v) = \dv(u)$. 
    However, this does not lead to counterexamples to Conjecture \ref{conj: D implies L}, since $D$-equivalent moduli spaces $M(v),$  $M(u)$ of dimension $2$ always satisfy $\dv(v) = \dv(u)$ by the Derived Torelli Theorem for K3 surfaces \cite{Muk87,Orl03}.
\end{remark}

\begin{corollary}\label{cor: L equivalence preserves fineness}
    Let $M(v)$ and $M(u)$ be moduli spaces of sheaves on a K3 surface $S$, of dimension $2n \geq 2$. Assume that $M(v)$ is a fine moduli space, and that $M(v)$ and $M(u)$ are $L$-equivalent. Then $M(u)$ is a fine moduli space.   
\end{corollary}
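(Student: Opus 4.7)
The plan is to reduce the statement directly to the numerical invariant $\dv(v)$ of the Mukai vector, which was shown in the previous results to be preserved under $L$-equivalence of moduli spaces on a fixed K3 surface $S$.

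First I would recall that by Lemma \ref{lem: basic results about coarseness}(iv), $M(v)$ is a fine moduli space if and only if the obstruction Brauer class $\alpha_{M(v)}$ is trivial, which happens if and only if $\crs(M(v)) = 1$. By Lemma \ref{lem: basic results about coarseness}(iii), this in turn is equivalent to $\dv(v) = 1$, where the divisibility is computed in the extended Néron–Severi lattice $\N(S)$. The same chain of equivalences applies to $M(u)$: it is fine if and only if $\dv(u) = 1$. So it suffices to prove that $\dv(v) = \dv(u)$.

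Next I would split the argument by dimension. In the case $2n \geq 4$, the equality $\dv(v) = \dv(u)$ is exactly the final statement of Corollary \ref{cor: L equivalent moduli spaces have same divisibility}, applied to the $L$-equivalent pair $M(v)$ and $M(u) = M(u)$. In the case $2n = 2$, where $M(v)$ and $M(u)$ are K3 surfaces, the analogue is spelled out in Remark \ref{rem: it also works for K3 surfaces, sort of}: the gluing group order formula takes a slightly different shape, but $L$-equivalent two-dimensional moduli spaces on $S$ still satisfy $\dv(v) = \dv(u)$ by combining the exact sequence $0 \to T(S) \to T(M) \to \Z/\dv(v)\Z \to 0$ with Lemma \ref{lem: gluing subgroup order is L-invariant}.

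Assuming $\dv(v) = 1$, it follows that $\dv(u) = 1$, and therefore $M(u)$ is a fine moduli space. There is no real obstacle here since all the work has been done in the preceding results; the only thing to verify is that the dimensions $2n \geq 2$ case covers both K3 surfaces and higher-dimensional hyperkähler manifolds, which is why I would treat the two cases separately in one line each.
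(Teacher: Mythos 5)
Your proof is correct and follows essentially the same route as the paper's: reduce fineness of a moduli space to the condition $\dv = 1$ on its Mukai vector, then get $\dv(v)=\dv(u)$ from Corollary \ref{cor: L equivalent moduli spaces have same divisibility} when $2n\geq 4$ and from Remark \ref{rem: it also works for K3 surfaces, sort of} when $2n=2$. One small quibble: Lemma \ref{lem: basic results about coarseness}(iii)--(iv) are stated only for $n\geq 2$ (indeed $\crs$ is not defined for K3 surfaces), so in the two-dimensional case the equivalence ``fine $\iff \dv(v)=1$'' should be justified directly by \cite{Cal00} and \cite{MM24}, as the paper does.
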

\begin{proof}
    Since the moduli space $M(v)$ is fine, it follows that $\dv(v) =1$ by \cite[Theorem 5.3.1]{Cal00} and \cite[Theorem 4.5]{MM24}. Thus, by Corollary \ref{cor: L equivalent moduli spaces have same divisibility} and Remark \ref{rem: it also works for K3 surfaces, sort of}, we have $\dv(u)=1$ as well. In particular, $M(u)$ is also a fine moduli space.
\end{proof}

\printbibliography
\end{document}